\newcommand{\mb}{\mathbb}
\newcommand{\mc}{\mathcal}
\newcommand{\mt}{\text}
\newcommand{\rr}{\mb R}
\newcommand{\cc}{\mb C}
\newcommand{\oo}{\mb O}
\newcommand{\ff}{\mb F}
\newcommand{\ee}{\mb E}
\newcommand{\vv}{\mb V}
\newcommand{\id}{\mt{I}}
\newcommand{\re}[1]{\mt{Re}(#1)}
\newcommand{\im}[1]{\mt{Im}(#1)}
\newcommand{\reo}{\re\oo}
\newcommand{\imo}{\im\oo}
\newcommand{\conj}{\overline}
\newcommand{\dirac}{\slashed{D}}
\newcommand{\Di}{\dirac}
\newcommand{\Hom}[3]{\mt{Hom}_{#1}({#2},{#3})}
\newcommand{\GL}{\mt{GL}}
\newcommand{\SL}{\mt{SL}}
\newcommand{\U}{\mt{U}}
\newcommand{\Sp}{\mt{Sp}}
\newcommand{\cg}{G_2^\cc}
\newcommand{\pb}{^*}
\newcommand{\dual}{^*}
\newcommand{\inv}{^{-1}}
\newcommand{\vol}{\mt{Vol}}
\newcommand{\T}{\mathsf{T}}
\newcommand{\wt}{\widetilde}
\newtheorem{theorem}{Theorem}
\newtheorem{proposition}{Proposition}
\newtheorem{definition}{Definition}
\newtheorem{remark}{Remark}
\newtheorem{example}{Example}
\newtheorem{lemma}{Lemma}
\begin{document}
\title[Complex $G_2$-manifolds and Seiberg-Witten Equations]{Complex $G_2$-manifolds and \\ Seiberg-Witten Equations}
\author{Selman Akbulut and \"Ust\"un Y\i ld\i r\i m}
\thanks{First named author is partially supported by NSF grant 1505364}
\keywords{$G_2$ manifold, complex $G_2$ manifold, associative submanifold} 
\address{Department  of Mathematics, Michigan State University, East Lansing, MI, 48824}
\email{akbulut@msu.edu }
\email{ustun@mailbox.org}
\subjclass[2010]{53C38, 53C29, 57R57}

\begin{abstract}
  We introduce the notion of complex $G_2$ manifold $M_{\cc}$, and complexification of a $G_2$ manifold $M\subset M_{\cc}$. As an application we show the following: If $(Y,s)$ is a closed oriented $3$-manifold with a $Spin^{c}$ structure, and $(Y,s)\subset (M, \varphi)$ is an imbedding as an associative submanifold of some $G_2$ manifold (such imbedding always exists), then the isotropic associative deformations of $Y$ in the complexified $G_2$ manifold $M_{\cc}$  is given by Seiberg-Witten equations. 

\end{abstract}
\maketitle

\vspace{-0.5in}

\tableofcontents

\section{Introduction}
An {\it almost $G_{2}$ manifold} $(M^7, \varphi)$  is a $7$-manifold whose tangent frame bundle reduces to the Lie group $G_{2}$. Sometimes $G_2$ manifolds are called  {\it manifolds with $G_2$ structure}. This structure is determined by a certain ``{\it positive}'' $3$-form $\varphi$, which in turn  induces a metric $g$ and a cross product $\times $ structure on the tangent bundle $TM$. An almost $G_2$ manifold is called a  $G_2$ manifold, if the induced metric has its holonomy group contained in $G_2$ (e.g. \cite{Bry05}). There is an interesting class of submanifolds of $Y^{3}\subset (M,\varphi)$ called associative submanifolds, they are the submanifolds where $\varphi$ restricts to the volume form of $Y$, equivalenty the tangent space $TY$ is closed under the cross product operation.

\vspace{.05in}

By \cite{RS07} every oriented 3-manifold $Y$ embeds into some $G_2$ manifold $(M,\varphi)$ as an associative submanifold. In fact, any oriented $3$-manifolds with a $Spin^{c}$ structure $(Y,s)$ embeds into a $G_2$ manifold with a $2$-frame field $(M, \varphi, \Lambda)$ as an associative submanifold, such that  $s$  is induced from the $2$-frame field $\Lambda $ \cite{AS08}.

\vspace{.05in}

In \cite{M98} McLean showed that the local deformations of the associative submanifold of a $G_2$ manifold can be identified with the kernel of a certain Dirac operator 
\linebreak $ \Di_{\bf A_{0}}: \Omega^{0}(\nu_Y) \to \Omega^{0}(\nu_Y)$, which is defined on the sections of the complexified normal bundle $\nu_{Y}$ of $Y$. In \cite{AS08} this result was extended to almost $G_2$ manifolds, by expressing the Dirac operator in terms of the cross product operation, and deforming its connection term $A_{0}\to A=A_{0} +a $,  by a $1$-form parameter $a\in \Omega^{1}(Y)$. This parameter makes the Dirac operator unobstructed:

 \begin{equation}
 \Di_{A_{0} +a} (v)= \sum e^{j}\times \nabla_{e_{j}}(v) + a(v).
 \end{equation}

\noindent By coupling this with a second equation we get Seiberg-Witten equations on $Y$: 

  \begin{equation}
    \label{eq:sweqs}
\begin{array}{l}
  \Di_{\bf A}(x) =0 \\
*F_{A}=   \sigma(x).
\end{array}
  \end{equation}
  
\noindent 
The second term can be written as $da=q(x)$ where $q(x)$ is some quadratic function.
This relates the Seiberg-Witten equations to the local deformation equations of the associative submanifolds, but they are not equivalent.  For one thing, these equations take place in the spinor bundle of $\nu_{Y}$ (complexification of $\nu_{Y}$) not in $\nu_{Y}$, and the misterious second equation of (2) has no apparent relation to the deformation of the associative submanifolds. Our motivation in writing this paper was to seek a larger manifold containing $Y$ with more structure, so that deformation equations of $Y$ in that manifold would be equivalent to both of the Seiberg-Witten equations, giving us a completely natural way to derive Seiberg-Witten equations from associative deformations. Here we achieve this goal by defining the notion of {\it complex $G_2$ manifold} $M_{\cc}$, and the notion of  {\it complexification} of a $G_2$ manifold $M\subset M_{\cc} \cong T^{*}M$.

\vspace{.05in}

\begin{theorem}
  Let  $(Y,s)$ be a  closed oriented  $3$-manifold with a $Spin^{c}$ structure, and $(Y,s)\subset (M, \varphi)$ be an imbedding as an associative submanifold of some $G_2$ manifold (note that such imbedding always exists). Then the  isotropic associative deformations of $(Y,s)$ in the complexified $G_2$ manifold $M_{\cc}$  is given by Seiberg-Witten equations (\ref{eq:sweqs}). 
\end{theorem}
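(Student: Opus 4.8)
The plan is to first pin down the geometry of $M_{\cc}\cong T^{*}M$ along $Y$, then read off the deformation equations. Along the zero section $M\subset T^{*}M$ one has the canonical splitting $T(T^{*}M)|_{M}\cong TM\oplus T^{*}M$, and using the $G_2$ metric $g$ this becomes $TM\oplus TM\cong TM\otimes\cc$, with the second factor playing the role of the imaginary directions; the complex $G_2$ structure is then the $\cc$-bilinear extension of the cross product $\times$ on $TM\otimes\cc$, so the structure group along $M$ is $G_2^{\cc}$. Along $Y$ we have the further $G_2$-equivariant splitting $TM|_{Y}=TY\oplus\nu_Y$, hence
\[
T(T^{*}M)|_{Y}\;\cong\;(TY\oplus\nu_Y)\otimes\cc\;=\;(TY\otimes\cc)\oplus(\nu_Y\otimes\cc),
\]
and the normal bundle of $Y$ in $M_{\cc}$ is $\nu(Y,M_\cc)\cong (TY\otimes\cc/TY)\oplus(\nu_Y\otimes\cc)\cong \Omega^{1}(Y)\text{-directions}\;\oplus\;(\nu_Y\otimes\cc)$, once we use $g$ to identify the cotangent directions along $Y$ with $TY$ and with $T^{*}Y$. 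The point of ``isotropic'' is that we only deform in directions killing the canonical symplectic form $\omega$ of $T^{*}M$ and making the deformed 3-plane totally real inside the complex-associative $3$-plane it spans; concretely this cuts the deformation data down to a pair $(v,a)$ with $v\in\Gamma(\nu_Y\otimes\cc)$ and $a\in\Omega^{1}(Y)$.

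Next I would identify $\nu_Y\otimes\cc$ with the spinor bundle of $s$: by \cite{AS08} the $2$-frame field $\Lambda$ on $(M,\varphi)$ restricts along the associative $Y$ to trivialize a rank-$2$ subbundle of $\nu_Y$, giving the $Spin^{c}$ structure $s$ and identifying the complexified normal bundle with the spinor bundle $W_{s}$; thus $v$ is a spinor and the decomposition above is exactly ``spinor plus connection $1$-form''. The connection term $A_{0}$ of McLean's Dirac operator is the one induced by $\nabla$ and $\times$ on $\nu_Y$ as in \cite{M98,AS08}, and deforming $Y$ inside $M_{\cc}$ in the $\Omega^{1}(Y)$-direction by $a$ twists this to $A=A_{0}+a$, because the cotangent shift changes the induced connection on the normal bundle precisely by the tautological $1$-form evaluated along $Y$.

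With this dictionary in place, I would complexify McLean's computation. The associative (i.e.\ complex-associative, totally real) condition for the deformed submanifold, linearized and then carried to the full nonlinear level, splits into two components according to the splitting $\nu(Y,M_\cc)=(\nu_Y\otimes\cc)\oplus\Omega^{1}(Y)$. The $(\nu_Y\otimes\cc)$-component is McLean's operator now twisted by $a$, namely $\sum e^{j}\times\nabla_{e_{j}}v+a(v)=\Di_{A_{0}+a}(v)=\Di_{\bf A}(v)=0$, the first Seiberg--Witten equation. The $\Omega^{1}(Y)$-component is the new one: tracking the second fundamental form / associator terms that mix the cotangent shift $a$ with the normal spinor $v$, and imposing the isotropy constraint $\omega|_{\tilde Y}=0$ (which forces $da$ to be the curvature term), yields $da=q(v)$ with $q$ the quadratic spinor expression, equivalently $*F_{A}=\sigma(v)$. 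Hence the isotropic associative deformation system in $M_{\cc}$ is exactly \eqref{eq:sweqs}.

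The main obstacle I expect is the last paragraph: carrying the associative condition in the $14$-dimensional $M_{\cc}$ all the way to the \emph{nonlinear} level and showing that its components match \emph{both} Seiberg--Witten equations on the nose — in particular that the curvature equation $*F_{A}=\sigma(x)$ is not an extra hypothesis but is genuinely forced by isotropy together with the complex-associative condition, and that the quadratic term $q(x)$ produced by the second fundamental form bookkeeping agrees with the Seiberg--Witten quadratic map. A secondary technical point is verifying that the ``isotropic'' constraint is exactly the right amount of restriction, i.e.\ that it reduces the rank-$11$ normal deformation problem to the pair $(v,a)$ with no further conditions and no lost degrees of freedom; closedness of $Y$ is used here so that the elliptic deformation complex behaves well and the two equations constitute the full system.
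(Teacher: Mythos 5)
Your setup agrees with the paper's: along the zero section $Y\subset M\subset T^{*}M$ the normal bundle splits as $\nu_\cc Y= JTY\oplus(\nu_\rr Y\oplus J\nu_\rr Y)$, so a deformation is a pair $(v,a)$ with $v$ a section of the complexified normal bundle (a spinor) and $a$ a $1$-form on $Y$, and the goal is to show these satisfy a Dirac equation and a curvature-type equation. But the actual content of the proof is exactly the part you defer ("the main obstacle"), and where your sketch is specific it does not match how the equations actually arise, and parts of it would fail. By the paper's description of the tangent space to the isotropic associative Grassmannian (Lemma \ref{lem:tangentofia}, proved via Lemma \ref{lem:tangentofrealassocplanes}: since $L_\cc$ is associative and $f_i\in JL\subset L_\cc$, the terms $[f_i,e_j,e_k]$ vanish), the associator condition has \emph{no} $\Omega^1(Y)$-component at all: it constrains only $v$, and the twist in $\dirac_{A}(v)=\dirac_{A_0}(v)+a(v)=0$ comes from splitting $\mc L_v(e_i)=\nabla_v e_i-\nabla_{e_i}v$ and setting $a(v)=-\sum e_i\times\nabla_v e_i$ --- not, as you assert without computation, from the cotangent shift changing the induced normal connection by the tautological $1$-form. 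So your plan to extract the second equation from "the $\Omega^1(Y)$-component of the associative condition" plus second-fundamental-form/associator bookkeeping starts from a component that is identically zero.

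In the paper the second equation comes purely from the isotropy condition kept at the nonlinear level: writing $\sigma_t^{*}\omega=0$ in Darboux coordinates $(q^i,p^i)$ with $(\sigma^1_t,\sigma^2_t,\sigma^3_t)=(x^1,x^2,x^3)$ yields
\begin{equation*}
0=\frac{\partial \sigma_t^{i+7}}{\partial x^j}-\frac{\partial \sigma_t^{j+7}}{\partial x^i}
+\sum_{k=4}^{7}\Bigl(\frac{\partial \sigma^{k}_t}{\partial x^i}\frac{\partial \sigma^{k+7}_t}{\partial x^j}-\frac{\partial \sigma^{k}_t}{\partial x^j}\frac{\partial \sigma^{k+7}_t}{\partial x^i}\Bigr),
\end{equation*}
which is read as $da=-q(\psi_1\otimes\psi_2)=-\psi_1\times\psi_2$, where $a=\sigma_t^{8}dx^1+\sigma_t^{9}dx^2+\sigma_t^{10}dx^3$ is the $T^{*}Y$-fiber part of the deformation and $\psi_1,\psi_2$ are built from the derivatives of the real and imaginary normal components $\sigma^4,\dots,\sigma^7$ and $\sigma^{11},\dots,\sigma^{14}$; no associator or curvature of an auxiliary connection enters, and the identification of this quadratic term with the Seiberg--Witten map $\sigma(x)$ is exactly what the paper postpones to future work rather than something forced "on the nose." So the genuine gap in your proposal is that the computation producing the second equation (the nonlinear Darboux-coordinate evaluation of $\sigma_t^{*}\omega=0$) is missing, and the mechanism you propose in its place would not produce it; likewise the origin of the perturbation $a(v)$ in the Dirac equation needs the Lie-derivative argument, not the claimed connection shift.
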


\vspace{.05in}

A $\cg$ manifold  $M_{\cc}$ is a  $14$-dimensional almost complex manifold $(M,J)$ whose tangent frame bundle reduce to the complex Lie group $\cg$,  and the complexification of a $G_2$ manifold $M\subset M_{\cc}$ is just the inclusion to the cotangent bundle $M\subset T^{*}M$ as the zero section. This endows $M_{\cc}$ with richer structures then $M$, namely a complex $3$-form, a symplectic form, and a positive definite metric $\{ \varphi_\cc, \omega, g \}$. Then if we start with an associative submanifold $Y\subset M$ of a $G_2$ manifold $M$, and complexify $M \subset M_{\cc}$, and then deform $Y$ inside of $M_{\cc}$ as an {\it isotropic} associative submanifold of $M_{\cc}$,  amazingly we get the Seiberg-Witten type equations we are looking for (Dirac equation plus a quadratic equation). 

\vspace{.1in}

To keep our notations consistent, we wrote our constructions from ground up starting with the relevant vector spaces. First we discuss $G_{2}$ and $\cg$ vector spaces and the various forms on them, and study some compatible structures. Then we study various Grassmann manifolds and their relation to each other. The Grassmannians $G^{\varphi}_{3}(\rr^7)\subset G_{3}(\rr^7)$ are studied in \cite{AK16}, and  $G^{\varphi}_{3}(\cc^7)$ along with its smooth compactification  in $G_{3}(\cc^7)$ is studied in \cite{AC15}. Then we discuss complexification of a $G_2$ manifold, and in Section 6 we prove our deformation result.

\vspace{.1in}

Let us remark on interesting parameter of $G_2$ manifolds introduced in \cite{AS08}, which has some relevance here: Given a $G_2$ manifold $(M,\varphi)$ we can always choose a non-vanishing $2$-frame field $\Lambda=\{u,v\}$ (this exists on any spin $7$-manifold by \cite{T67}), then $\{u,v, u\times v\}$ generates a non-vanishing $3$-frame field on $TM$, then by using the induced metric we get the decomposition $TM=E\oplus V$, where $V=E^{\perp}$. Furthermore any unit section $\xi\in\Gamma(E)$ (there are 3 independent ones) gives a complex structure on $V$ by cross product map $J_{\xi}: V\to V$.   In particular, this says that the tangent bundle $TM$ of any $G_2$ manifold reduces to an $SU(2)$ bundle, as the $3$-dimensional trivial bundle plus $4$-dimensional HyperKahler bundle. Also by using this $\Lambda$,  the $spin^{c}$ structures on moving associative submanifolds $Y\subset M$ (which is used to define Seiberg-Witten equation), can be made to be induced from the global parameter $(M, \Lambda)$. We will address integrability conditions and analysis of the quadratic term in a future paper.

\section{Linear algebra}
Let $V_i$ be a vector space (over $\rr$) of dimension $2n_i$ with almost complex structures $J_i$ for $i=1,2$.
As usual, we can view $V_i$ as a complex vector space by setting $(a+ib)v=av+bJ_iv$.
(Here, we are using $i$ to denote both the index and $\sqrt{-1}$ but which one we mean will always be clear in a given context.)
Let $\left\{ e_i^j \right\}$ be a complex basis for $V_i$ and set $f_i^j=J_ie_i^j$.
Then, with respect to the real bases $\left\{ e_i^j,f_i^j \right\}$,
\begin{equation*}
  J_j = \begin{pmatrix}
    0 & -\id_{n_j} \\
    \id_{n_j} & 0
  \end{pmatrix}
\end{equation*}
and the embedding $\cc^{n_2\times n_1}\to \rr^{2n_2\times 2n_1}$ induced from $\Hom\cc{V_1}{V_2}\subset \Hom\rr {V_1}{V_2}$ is given by
\begin{equation*}
  X+iY \mapsto
  \iota(X+iY) =
  \begin{pmatrix}
    X & -Y \\
    Y & X
  \end{pmatrix}
\end{equation*}
for $X,Y\in\rr^{n_2\times n_1}$.
It is also easy to see that $M\in \rr^{2n\times 2n}$ is $J$-linear if and only if it is in this form.
In particular, we can identify $A=X+iY\in \mt{GL}(n,\cc)$ with its image $\iota(A)\in \mt{GL}(2n,\rr)$.

\subsection{Symmetric bilinear forms and \texorpdfstring{$S^1$}{circle} family of metrics}
By a metric $g$ on a vector space $V$, we mean a non-degenerate, $\rr$-bilinear map $g:V\times V\to \rr$.

Let $(V,B)$ be a vector space over $\cc$ with a symmetric (non-degenerate) bilinear form.
We define the orthogonal group O($V,B$) to be the subgroup of GL$(V)$ that preserves $B$.
More precisely,
\begin{equation}
  \mt O(V,B) = \left\{ A\in\mt{GL}(V)\;|\; B(Au,Av)=B(u,v) \;\;\mt{for all } u,v\in V \right\}.
\end{equation}

One can always find an orthonormal basis $\left\{ e_i \right\}$ for $B$ such that $B(e_i,e_j)=\delta^i_j$.
Matrix representation of an orthogonal transformation $A\in\mt O(V,B)$ satisfies the usual identity $A^\T A=\id$.
Decomposing $A=X+iY$ into real and complex parts, we get 
\begin{eqnarray*}
  X^\T X - Y^\T Y &=&  \id \\
  X^\T Y + Y^\T X &=& 0.
\end{eqnarray*}
Using these relations, it is easy to see that
\begin{equation*}
  \iota(A)^\T
  \begin{pmatrix}
    \id & 0 \\
    0 & -\id 
  \end{pmatrix}
  \iota(A) =
  \begin{pmatrix}
    \id & 0 \\
    0 & -\id 
  \end{pmatrix}.
\end{equation*}
In other words, it preserves the standard signature $(n,n)$ metric (with respect to the $\rr$-basis $\left\{ e_i, ie_i \right\}$).
More invariantly, this metric is given by $g = \re B$.
Note that $\im B = -\re {B(iu,v)}$.
So, fixing a complex structure on $V$, one can describe the imaginary part of $B$ in terms of its real part.
In fact, this leads to an $S^1$ family of $(n,n)$ metrics on $V$ by 
\begin{equation}
\re{B((\cos(t)-i\sin(t))x,y)}=\cos(t)\re B + \sin(t)\im B.
\label{eq:s1fammetrics}
\end{equation}
Indeed we are only applying an invertible linear map to the first variable of the LHS  of (\ref{eq:s1fammetrics}).
Therefore, since at $t=0$ it is a $(n,n)$ metric, it is a $(n,n)$ metric for all $t$.
\begin{remark}
Note that since $B$ is complex bilinear, its real part satisfies $g(iu,iv)=-g(u,v)$.
\label{rmk:anticompg}
\end{remark}
\begin{remark}
Also, note that the $S^1$ family of metrics connects $g$ to $-g$ so any such family has to consist of $(n,n)$ metrics. 
\end{remark}

Next, we consider the converse.
\begin{definition}
Let $(V,g,J)$ be a vector space with an inner product $g$, and an almost complex structure $J$.
We call $g$ and $J$ skew-compatible if $g(Ju,Jv)=-g(u,v)$ for all $u,v\in V$.
\label{defn:anticompt}
\end{definition}
\begin{remark}
Note that a skew-compatible $J$ is self adjoint, i.e.
\begin{equation}
g(Ju,v)=g(u,Jv).
\label{eq:slfadj}
\end{equation}
\end{remark}
Let $(V,J)$ be a $\rr$-vector space with an almost complex structure.
We view $V$ as a $\cc$-vector space by setting $(a+ib)v=av+bJv$ for $a,b\in\rr$, and $v \in V$.
\begin{proposition}
Let $g$ be an inner product on $V$ which is skew-compatible with $J$.
Then we can define a complex symmetric bilinear form $B$ on $V$ by 
$$B(u,v)=g(u,v)-ig(Ju,v).$$
\end{proposition}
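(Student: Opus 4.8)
The plan is to check, one at a time, the three properties that make $B$ a complex symmetric bilinear form: $\cc$-bilinearity, symmetry, and non-degeneracy. Since $g$ is $\rr$-bilinear and $J$ is $\rr$-linear, $B$ is manifestly $\rr$-bilinear, so the only substantive point for $\cc$-bilinearity is compatibility with multiplication by $i$, which on $V$ acts as $J$. I would verify this by a direct computation in the first slot: expanding the definition and using $J^2=-\id$ gives
$B(Ju,v)=g(Ju,v)-ig(J^2u,v)=g(Ju,v)+ig(u,v)=i\bigl(g(u,v)-ig(Ju,v)\bigr)=iB(u,v)$,
which is exactly complex linearity in the first variable.

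For symmetry, the real part is immediate since $g$ is symmetric, so $\re{B(u,v)}=g(u,v)=g(v,u)=\re{B(v,u)}$. For the imaginary part I need $g(Ju,v)=g(Jv,u)$, i.e.\ the self-adjointness of $J$ recorded in (\ref{eq:slfadj}); this follows from skew-compatibility (Definition \ref{defn:anticompt}) by writing $g(Ju,Jv)=-g(u,v)$, substituting $v\mapsto -Jv$, and simplifying with $J^2=-\id$. Hence $B(v,u)=g(v,u)-ig(Jv,u)=g(u,v)-ig(Ju,v)=B(u,v)$. Combined with complex linearity in the first slot, symmetry automatically promotes $B$ to a genuinely $\cc$-bilinear form (linear also in the second slot).

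Finally, for non-degeneracy, suppose $B(u,v)=0$ for all $v\in V$; taking real parts gives $g(u,v)=0$ for all $v$, so $u=0$ because $g$ is non-degenerate, and therefore $B$ is non-degenerate. I do not expect any real obstacle here: the argument is a short sequence of identities, and the only places that require care are the derivation of (\ref{eq:slfadj}) from skew-compatibility and keeping the sign from $J^2=-\id$ straight when checking $\cc$-linearity.
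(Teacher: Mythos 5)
Your proof is correct and follows essentially the same route as the paper: $\rr$-bilinearity and symmetry via the self-adjointness identity (\ref{eq:slfadj}), and $\cc$-linearity by the identical computation $B(Ju,v)=g(Ju,v)+ig(u,v)=iB(u,v)$. The extra checks you include (deriving (\ref{eq:slfadj}) from skew-compatibility and verifying non-degeneracy from $\re B = g$) are correct additions; the paper relegates the former to a preceding remark and does not spell out the latter.
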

\begin{proof}
By using (\ref{eq:slfadj}) and the fact that $g$ is symmetric, it is clear that $B$ is also symmetric.
The linearity over $\rr$ is also clear.
So, we only need to show $B(Ju,v)=iB(u,v)$.
\begin{align*}
B(Ju,v) &= g(Ju,v)-ig(J^2u,v) \\
&= g(Ju,v)+ig(u,v) \\
&= i\left( g(u,v)-ig(Ju,v) \right) \\
&= iB(u,v)
\end{align*}
\end{proof}
\begin{proposition}
Let $(V,g,J)$ be a vector space of dimension $2n$ with skew-compatible $g$ and $J$.
Then, $g$ is necessarily an $(n,n)$ metric on $V$.
\end{proposition}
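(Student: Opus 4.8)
The plan is to exploit the symmetry between $g$ and $-g$ that skew-compatibility forces. Since $J$ is invertible (indeed $J\inv=-J$), the defining identity $g(Ju,Jv)=-g(u,v)$ says precisely that $J\colon (V,g)\to(V,-g)$ is an isometry of real quadratic spaces. Hence $(V,g)$ and $(V,-g)$ are isomorphic as quadratic spaces.

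Now I would invoke Sylvester's law of inertia. Because $g$ is an inner product it is non-degenerate, so it has a well-defined signature $(p,q)$ with $p+q=\dim V=2n$. The form $-g$ then has signature $(q,p)$, and isomorphic real quadratic spaces have equal signatures, so the isometry above gives $(p,q)=(q,p)$, i.e. $p=q$; therefore $p=q=n$ and $g$ is an $(n,n)$ metric.

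I do not expect a genuine obstacle here: the only point needing care is that the word ``signature'' is meaningful, which is exactly the non-degeneracy already built into the hypothesis that $g$ is an inner product. If one prefers an explicit model to the abstract inertia argument, one can instead use the preceding proposition: form $B(u,v)=g(u,v)-i\,g(Ju,v)$, note $g=\re B$ and that $B$ is non-degenerate (if $B(u,\cdot)\equiv 0$ then $g(u,\cdot)=\re{B(u,\cdot)}\equiv 0$, forcing $u=0$), pick a $B$-orthonormal $\cc$-basis $\{e_1,\dots,e_n\}$, and compute the Gram matrix of $g$ in the real basis $\{e_1,\dots,e_n,Je_1,\dots,Je_n\}$: one finds $g(e_j,e_k)=\delta^j_k$, $g(Je_j,Je_k)=\re{B(ie_j,ie_k)}=-\delta^j_k$, and $g(e_j,Je_k)=\re{B(e_j,ie_k)}=0$, so the Gram matrix is $\mt{diag}(\id_n,-\id_n)$. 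This exhibits $g$ as an $(n,n)$ metric directly, and makes transparent the link with the $S^1$ family of metrics in (\ref{eq:s1fammetrics}).
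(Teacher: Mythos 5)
Your proof is correct, and your primary argument is genuinely different from the paper's. The paper disposes of this proposition with ``follows from the discussions above,'' meaning exactly your second, explicit argument: build $B(u,v)=g(u,v)-ig(Ju,v)$ via the preceding proposition, choose a $B$-orthonormal complex basis (available once $B$ is non-degenerate, which you rightly check from non-degeneracy of $g=\re B$), and read off the Gram matrix $\mt{diag}(\id_n,-\id_n)$ in the basis $\{e_j, Je_j\}$ --- this is the same $(n,n)$ computation the paper carries out for $\mt O(V,B)$ and $\re B$ earlier in the section. Your first argument, by contrast, bypasses $B$ entirely: the identity $g(Ju,Jv)=-g(u,v)$ exhibits $J$ as an isometry $(V,g)\to(V,-g)$, and Sylvester's law of inertia forces the signature $(p,q)$ to equal $(q,p)$, hence $p=q=n$. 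This is more elementary and slightly more general --- it uses only that $J$ is invertible and anti-isometric, not that $J^2=-\id$ or that $g$ extends to a complex bilinear form --- whereas the paper's route has the advantage of producing the explicit $B$-orthonormal splitting that the surrounding discussion (the $S^1$ family of metrics, the identification with the standard $(n,n)$ form) actually uses later. Both are complete; no gaps.
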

\begin{proof}
Follows from the discussions above.
\end{proof}

Next, we take yet another step back.
Namely, we start with a $\rr$ vector space $V$ of dimension $n$ with a non-degenerate metric $g$.
We consider the complexifications $V_\cc$ and $g_\cc$ of $V$ and $g$ where
\begin{align*}
V_{\cc}&= V\otimes \cc = V\oplus iV \\
g_\cc(u+iv,u+iv) &= g(u,u) - g(v,v) + 2ig(u,v)
\end{align*}
for $u,v \in V$.
Clearly, $g_{\cc}$ is a symmetric bilinear form on $V_{\cc}$.
Hence, again by the above discussion, we get a $S^1$ family of $(n,n)$ metrics on $V_{\cc}$.
Later in subsection \ref{ssec:complexg2space}, we will take this construction one step further.

Another common construction along these lines is to complexify a vector space with an almost complex structure $(V,J)$.
We set $V_\cc=V\oplus iV$ as before and define $J_\cc(u+iv)=J(u)+iJ(v)$.
Further, we set
\begin{align*}
V^{1,0}&= \left\{ u\in V_{\cc} \; | \; J_\cc(u)=iu \right\} \text{ and} \\
V^{0,1}&= \left\{ u\in V_{\cc} \; | \; J_\cc(u)=-iu \right\}.
\end{align*}
This is the usual eigenspace decomposition for $J_\cc$.
We have the following projection maps $\xi:V_\cc\to V^{1,0}$ and $\conj\xi:V_\cc\to V^{0,1}$ defined by
\begin{equation*}
\xi(u)=\frac{1}{2}\left( u-iJ_{\cc}(u) \right).
\end{equation*}
Note that $\xi\big|_{V}$ is a $\cc$-linear isomorphism (with respect to $J$ on the domain).
If we have a symmetric $\cc$-bilinear form $B$ on $V$, then we can restrict to its real part $g=\re B$ and then complexify $g_\cc$.
Using $\xi$ we can compare these two symmetric bilinear forms on $V$ and on $V^{1,0}$.
\begin{proposition}
For $u,v\in V$,
\begin{equation*}
g_{\cc}(\xi u,\xi v) = \frac{1}{2}B(u,v).
\end{equation*}
\end{proposition}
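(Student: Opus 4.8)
The plan is a direct computation: unwind the definition of $\xi$ and apply the two compatibility facts already established. Since $u,v\in V$ we have $J_\cc u = Ju$ and $J_\cc v = Jv$, so $\xi u = \tfrac12(u-iJu)$ and $\xi v=\tfrac12(v-iJv)$, and crucially all four vectors $u,v,Ju,Jv$ lie in the real subspace $V\subset V_\cc$. Expanding $g_\cc(\xi u,\xi v)$ by $\cc$-bilinearity produces four terms, each of the form $g_\cc$ evaluated on a pair of vectors from $V$; but on $V$ the complexified form $g_\cc$ restricts to $g$ (immediate from the definition of the complexification), so
\begin{equation*}
g_\cc(\xi u,\xi v) = \tfrac14\bigl(g(u,v) - ig(u,Jv) - ig(Ju,v) - g(Ju,Jv)\bigr).
\end{equation*}

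Next I would invoke the two identities. Because $g=\re B$ is the real part of a complex bilinear form, Remark \ref{rmk:anticompg} gives $g(Ju,Jv)=-g(u,v)$, i.e. $g$ is skew-compatible with $J$ in the sense of Definition \ref{defn:anticompt}; hence $J$ is self-adjoint, $g(Ju,v)=g(u,Jv)$, by (\ref{eq:slfadj}). Substituting $-g(Ju,Jv)=g(u,v)$ and $g(u,Jv)=g(Ju,v)$ into the displayed expression collapses it to
\begin{equation*}
g_\cc(\xi u,\xi v) = \tfrac14\bigl(2g(u,v) - 2ig(Ju,v)\bigr) = \tfrac12\bigl(g(u,v) - ig(Ju,v)\bigr).
\end{equation*}

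Finally, by the formula of the preceding proposition (equivalently, using $\im B = -\re{B(iu,v)} = -g(Ju,v)$ as noted just before it), the bracket is exactly $B(u,v)$, giving $g_\cc(\xi u,\xi v) = \tfrac12 B(u,v)$. I do not expect any genuine obstacle here beyond bookkeeping; the only points meriting a moment's care are that $g_\cc$ really does restrict to $g$ on $V$, and that the hypotheses of the earlier propositions genuinely apply, i.e. that $g$ is skew-compatible with $J$ — which is precisely the content of Remark \ref{rmk:anticompg}.
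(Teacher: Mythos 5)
Your proof is correct and follows essentially the same route as the paper: expand $g_\cc(\xi u,\xi v)$ by bilinearity (using that $g_\cc$ restricts to $g$ on $V$), then apply the skew-compatibility $g(Ju,Jv)=-g(u,v)$ from Remark \ref{rmk:anticompg} together with the resulting self-adjointness $g(Ju,v)=g(u,Jv)$ to collapse the four terms to $\tfrac12\bigl(g(u,v)-ig(Ju,v)\bigr)=\tfrac12 B(u,v)$. Your explicit justification that the hypotheses of the earlier statements apply is a slightly more careful write-up of exactly the argument the paper gives.
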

\begin{proof}
  This is a straight forward computation as follows.
\begin{align*}
g_{\cc}(\xi u, \xi v) &= \frac{1}{4}g_{\cc}(u-iJ_{\cc}u,v-iJ_{\cc}v) \\
&= \frac{1}{4}\left( g(u,v)-g(Ju,Jv) -i\left( g(Ju,v)+g(u,Jv) \right) \right) \\
&= \frac{1}{4}\left( 2g(u,v)-2ig(Ju,v)\right) \\
&= \frac{1}{2}\left( g(u,v)-ig(Ju,v) \right) \\
&= \frac{1}{2}B(u,v)
\end{align*}
since by Remark \ref{rmk:anticompg}, $g$ and $J$ are skew-compatible and therefore $J$ is also self-adjoint.
\end{proof}

\subsection{The group \texorpdfstring{$\cg$}{complex G2}}
Let $(\oo,B)$ be an octonion algebra over $\cc$ (see \cite{SV13}).
We can associate a quadratic form $Q$ to $B$ in the standard way: $Q(u)=B(u,u)$.
In the other direction, we have
\begin{equation}
  B(u,v) = \frac{1}{2}\left( Q(u+v)-Q(u)-Q(v) \right).
  \label{eq:quad2bil}
\end{equation}

Octonions satisfy
\begin{equation}
Q(uv)=Q(u)Q(v) \qquad \text{for }u,v\in\oo.
  \label{eq:comp}
\end{equation}
\begin{proposition}
  \label{prop:ortmult}
  For $u,v,v'\in \oo$, we have $Q(u)B(v,v')=B(uv,uv')$ and $B(v,v')Q(u)=B(vu,v'u)$.
  In particular, for $Q(u)=1$, left or right multiplication by $u$ is an orthogonal transformation of $(\oo,B)$.
\end{proposition}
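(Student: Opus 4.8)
The plan is to prove both identities by polarizing the composition law \eqref{eq:comp}. Fix $u\in\oo$ and consider the map $v\mapsto Q(uv)$. By \eqref{eq:comp} this map is just the scalar $Q(u)$ times the quadratic form $Q$, and since $B$ is recovered from $Q$ by the polarization formula \eqref{eq:quad2bil}, polarizing this identity in $v$ should immediately yield $B(uv,uv')=Q(u)B(v,v')$.

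Concretely, I would apply \eqref{eq:comp} to the product $u(v+v')$. Octonion multiplication is $\cc$-bilinear, so $u(v+v')=uv+uv'$, and hence
\begin{equation*}
  Q(uv+uv') = Q\bigl(u(v+v')\bigr) = Q(u)\,Q(v+v').
\end{equation*}
Expanding both sides with \eqref{eq:quad2bil} gives
\begin{equation*}
  Q(uv) + Q(uv') + 2B(uv,uv') = Q(u)\bigl(Q(v) + Q(v') + 2B(v,v')\bigr).
\end{equation*}
Now invoke \eqref{eq:comp} once more to replace $Q(uv)$ by $Q(u)Q(v)$ and $Q(uv')$ by $Q(u)Q(v')$; these terms cancel the corresponding terms on the right, and dividing by $2$ (legitimate since we work over $\cc$) leaves $B(uv,uv')=Q(u)B(v,v')$. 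The second identity is proved in the same way, starting instead from $(v+v')u=vu+v'u$ and using \eqref{eq:comp} in the form $Q(wu)=Q(w)Q(u)$.

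For the final assertion, suppose $Q(u)=1$. Then the first identity reads $B(L_u v, L_u v')=B(v,v')$ for all $v,v'\in\oo$, where $L_u$ denotes left multiplication by $u$; thus $L_u$ preserves $B$. Since $B$ is non-degenerate, $L_u$ is injective, hence bijective as $\oo$ is finite-dimensional, so $L_u\in\mt{O}(\oo,B)$. The same argument applied to the second identity handles right multiplication $R_u$.

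There is essentially no serious obstacle here — the argument is a standard polarization — but two small points deserve care: one must use that octonion multiplication, though non-associative, is still additive and $\cc$-linear in each variable (so that $u(v+v')=uv+uv'$ holds), and one must know that a $B$-preserving endomorphism of the finite-dimensional space $\oo$ is automatically invertible in order to conclude membership in $\mt{O}(\oo,B)$ in the last step. Neither causes any difficulty over $\cc$.
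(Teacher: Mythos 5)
Your proof is correct and follows essentially the same route as the paper's: both arguments polarize the composition law $Q(uv)=Q(u)Q(v)$ via the formula $B(v,v')=\tfrac{1}{2}\left( Q(v+v')-Q(v)-Q(v') \right)$ together with distributivity $u(v+v')=uv+uv'$, only arranged in a slightly different order (the paper runs the chain starting from $Q(u)B(v,v')$, you expand $Q(u(v+v'))$ and cancel). Your extra remark that a $B$-preserving map is automatically invertible by non-degeneracy is a small point the paper leaves implicit, but it does not change the substance.
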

\begin{proof}
  \begin{eqnarray*}
    Q(u)B(v,v') &=& \frac{1}{2}Q(u)\left[ Q(v+v')-Q(v)-Q(v') \right ] \\
    &=& \frac{1}{2}\left[ Q(u)Q(v+v')-Q(u)Q(v)-Q(u)Q(v') \right ] \\
    &=& \frac{1}{2}\left[ Q(u(v+v'))-Q(uv)-Q(uv') \right ] \\
    &=& \frac{1}{2}\left[ Q(uv+uv')-Q(uv)-Q(uv') \right ] \\
    &=& B(uv,uv')
  \end{eqnarray*}
  The other equality can be proved similarly.
\end{proof}

\begin{definition}
We define $\cg$ to be the automorphism group of $\oo$.
\end{definition}
\begin{proposition}
  $\cg\le O(\oo,B)$
  \label{prop:ort}
\end{proposition}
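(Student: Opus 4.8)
The plan is to show that any automorphism of the octonion algebra $\oo$ automatically preserves the bilinear form $B$. The key observation is that both the identity element and the quadratic form $Q$ are determined by the algebra structure alone, so an automorphism must respect them.

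First I would note that an algebra automorphism $A$ fixes the identity: $A(1) = 1$, since $A(1) = A(1 \cdot 1) = A(1)A(1)$ forces $A(1)$ to be idempotent, and the only invertible idempotent is $1$ (and $A$ is invertible by definition of automorphism). Next I would recall the standard fact that in a composition algebra the quadratic form can be recovered from multiplication via the formula expressing $Q(u)$ in terms of $u$, its conjugate $\bar u = 2B(u,1)1 - u$ (equivalently via the trace and norm as coefficients of the minimal polynomial $u^2 - 2B(u,1)u + Q(u)1 = 0$ that every $u \in \oo$ satisfies). Concretely, $Q(u)\,1 = u\bar u$ and $\bar u$ is characterized algebraically; therefore $Q(A u)\,1 = (Au)\overline{(Au)} = A(u)A(\bar u) = A(u \bar u) = A(Q(u)\,1) = Q(u)\,1$, using that $A$ commutes with conjugation (which follows because conjugation is built from the identity and the trace form, and ultimately from the minimal polynomial relation which $A$ preserves). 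Hence $Q(Au) = Q(u)$ for all $u$, i.e. $A$ preserves $Q$, and then by the polarization identity \eqref{eq:quad2bil} it preserves $B$ as well, giving $A \in O(\oo,B)$.

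The main obstacle is establishing cleanly that an automorphism commutes with conjugation, or equivalently that $Q$ is intrinsic to the algebra — this is where one must actually invoke the composition algebra structure rather than just the bare algebra axioms. The cleanest route is: every $u$ satisfies a quadratic relation $u^2 - t(u)u + n(u)1 = 0$ with $t(u), n(u) \in \cc$ uniquely determined (for $u \notin \cc\cdot 1$) by the requirement that $1, u, u^2$ be dependent in the prescribed way, and one identifies $n(u) = Q(u)$, $t(u) = 2B(u,1)$. Since $A$ is a $\cc$-linear algebra map, $A u$ satisfies $(Au)^2 - t(u)(Au) + n(u)1 = 0$, so by uniqueness $n(Au) = n(u)$, i.e. $Q$ is preserved. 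I would present this as the core lemma and then the conclusion $A \in O(\oo, B)$ is immediate from \eqref{eq:quad2bil}.
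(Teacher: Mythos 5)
Your proof is correct, but it follows a different route from the paper's. The paper proves the proposition by a direct computation with conjugation: writing $B(u,v)=\re{\conj uv}$, it uses the $\cg$-equivariance of the conjugation map (which comes from $\cg$ fixing $1$ and preserving $\imo$) to get $B(Au,Av)=\re{\conj{Au}\,Av}=\re{A(\conj uv)}=\re{\conj uv}=B(u,v)$; note that in the paper this conjugation formalism is only set up in the lines immediately following the proposition, so the proof leans on facts stated just afterwards. You instead show that the quadratic form itself is intrinsic to the algebra: every $u$ satisfies $u^2-2B(u,1)u+Q(u)1=0$, and for $u\notin\cc\cdot 1$ the coefficients are uniquely determined by linear independence of $1$ and $u$, so an automorphism (which fixes $1$) preserves $Q$, and polarization via (\ref{eq:quad2bil}) gives preservation of $B$. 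This is the standard Springer--Veldkamp argument; it buys self-containedness (no need to pre-establish conjugation equivariance or preservation of $\imo$), while the paper's computation buys brevity once the conjugation machinery is in place. One small point worth tightening: your claim $A(1)=1$ via ``the only invertible idempotent is $1$'' tacitly assumes $A(1)$ is invertible; a cleaner one-line argument is that $y=A(A\inv y)=A(1\cdot A\inv y)=A(1)\,y$ for all $y\in\oo$, so $A(1)$ is an identity element and hence equals $1$.
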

\begin{proof}
  Let $A\in\cg$ and $u,v\in \oo$.
  \begin{eqnarray*}
    B(Au,Av) &=& \re{\conj{Au}Av} \\
    &=& \re{A\conj{u}Av} \\
    &=& \re{A\conj uAv} \\
    &=& \re{A(\conj uv)} \\
    &=& \re{\conj uv} \\
    &=& B(u,v)
  \end{eqnarray*}
\end{proof}
\begin{remark}
In fact, in \cite{SV13}, it is proved that $\cg$ is connected and hence, $\cg\le \mt {SO}(\oo,B)$.
\end{remark}

Let $\reo$ denote the span of $1$ and $\imo$ be its complement with respect to $B$.
Clearly, for $v\in \oo$, there exists $a\in\reo$ and $b\in\imo$ such that $v=a+b$.
Then, we can define the conjugation map: 
$$\conj v = a-b.$$
Clearly, $\cg$ preserves $1$ and $\imo$.
Hence, conjugation is $\cg$ equivariant.
Using the conjugation, we can express the inner product as $B(u,v)=\re{\conj uv}$.
Also, one can show $\conj{uv}=\conj v\, \conj u$.

Define the cross product by $u\times v = \im{\conj vu}$.
We immediately get
$$u\times v = \conj vu - B(u,v).$$
\begin{proposition}
  The cross product is skew-symmetric.
\end{proposition}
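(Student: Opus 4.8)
The plan is to work directly from the definition $u\times v=\im{\conj v u}$ together with the two conjugation facts recorded just above the statement: that $\conj{ab}=\conj b\,\conj a$ for all $a,b\in\oo$, and that conjugation is an involution ($\conj{\conj a}=a$). The key elementary observation is that for any $w\in\oo$ one has $\im w=\tfrac12(w-\conj w)$; this follows from the decomposition $w=a+b$ with $a\in\reo$, $b\in\imo$, since then $\conj w=a-b$ and hence $w-\conj w=2b=2\,\im w$. (Equivalently, one can start from the already-derived formula $u\times v=\conj v u - B(u,v)$ and use that $B$ is symmetric, which amounts to the same computation.)

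The steps, in order, are: first rewrite $u\times v=\tfrac12\bigl(\conj v u-\conj{\conj v u}\bigr)$; then apply $\conj{\conj v u}=\conj u\,\conj{\conj v}=\conj u v$ to get $u\times v=\tfrac12(\conj v u-\conj u v)$. Performing the same manipulation with the roles of $u$ and $v$ exchanged gives $v\times u=\tfrac12(\conj u v-\conj v u)$. Adding the two expressions, the right-hand sides cancel termwise, so $u\times v+v\times u=0$, which is exactly skew-symmetry.

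I do not expect any genuine obstacle here; the only thing to be careful about is the order reversal in $\conj{ab}=\conj b\,\conj a$ (octonion conjugation is an anti-automorphism, not an automorphism), since getting that backwards would spuriously make the cross product symmetric. A clean way to present it:
\begin{align*}
u\times v+v\times u
&=\tfrac12\bigl(\conj v u-\conj{\conj v u}\bigr)+\tfrac12\bigl(\conj u v-\conj{\conj u v}\bigr)\\
&=\tfrac12\bigl(\conj v u-\conj u v\bigr)+\tfrac12\bigl(\conj u v-\conj v u\bigr)=0.
\end{align*}
This uses only $\reo$-/$\imo$-decomposition and the stated conjugation identities, both available from the text, so the argument is self-contained within the excerpt.
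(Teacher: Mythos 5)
Your argument is correct: the formula $\im w=\tfrac12(w-\conj w)$ is valid for the decomposition $\oo=\reo\oplus\imo$, and combined with the anti-automorphism property $\conj{ab}=\conj b\,\conj a$ it yields the closed expression $u\times v=\tfrac12\bigl(\conj v u-\conj u v\bigr)$, from which skew-symmetry is immediate by inspection. The paper takes a slightly different route: it first shows the diagonal vanishes, computing $u\times u=\im{\conj u u}=-\im{\conj{\conj u u}}=-\im{\conj u u}$ (using $\im{\conj w}=-\im w$ and the same anti-automorphism identity), and then polarizes, expanding $0=(u+v)\times(u+v)$ and using bilinearity to conclude $u\times v+v\times u=0$. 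The ingredients are the same in both arguments; what your version buys is that it bypasses the polarization step and produces an explicitly antisymmetrized formula for the cross product, which is arguably more transparent, while the paper's version only needs the sign behavior of $\im{}$ under conjugation rather than the full identity $\im w=\tfrac12(w-\conj w)$. Your caution about the order reversal in $\conj{ab}=\conj b\,\conj a$ is exactly the right point to flag, since it is the step where a sign error would destroy the conclusion.
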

\begin{proof}
  First note that
\begin{eqnarray*}
  u\times u &=& \im{\conj u u} \\
  &=& -\im{\conj{\conj u u}} \\
  &=& -\im{\conj uu} \\
  &=& -u\times u.
\end{eqnarray*}
Thus,
\begin{eqnarray*}
  0&=& (u+v)\times (u+v) \\
  &=& u\times u + u\times v + v\times u + v\times v \\
  &=& u \times v + v \times u
\end{eqnarray*}
which was to be shown.
\end{proof}

Let $\varphi_0(u,v,w) = B(u\times v, w)$. Then, 
\begin{proposition}
$\varphi_0$ is an alternating 3-form on $\imo$.
\end{proposition}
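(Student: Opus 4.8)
The plan is to check $\cc$-trilinearity, which is immediate, and then to establish that $\varphi_0$ is alternating by showing it vanishes on every triple of arguments in $\imo$ in which two of the three entries coincide. Trilinearity is clear: $B$ is $\cc$-bilinear, and the cross product $u\times v=\conj vu-B(u,v)$ is $\cc$-bilinear, being assembled from the $\cc$-linear conjugation, the $\cc$-bilinear octonion multiplication, and $B$ itself; hence $\varphi_0(u,v,w)=B(u\times v,w)$ is $\cc$-trilinear.

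Skew-symmetry in the first two slots is nothing but the skew-symmetry of $\times$ proved just above: $\varphi_0(u,u,w)=B(u\times u,w)=0$. The heart of the argument is the vanishing $\varphi_0(u,v,v)=0$ for $u,v\in\imo$, i.e. the orthogonality $B(u\times v,v)=0$. Here I would use that for $v\in\imo$ one has $\conj v=-v$ and $B(1,v)=0$, so that
\[
B(u\times v,v)=B\bigl(\conj vu-B(u,v)\cdot 1,\,v\bigr)=B(\conj vu,v)=-B(vu,v).
\]
Taking $u'=1$ in the identity $Q(v)B(u,u')=B(vu,vu')$ (a relabelling of Proposition \ref{prop:ortmult}) gives $B(vu,v)=Q(v)B(u,1)$, and $B(u,1)=0$ because $u\in\imo$ is by definition $B$-orthogonal to $1$. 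Hence $B(u\times v,v)=0$.

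The remaining diagonal case is then automatic: combining skew-symmetry of $\times$ with the step just established, $\varphi_0(u,v,u)=B(u\times v,u)=-B(v\times u,u)=-\varphi_0(v,u,u)=0$. Since a trilinear form that vanishes whenever two of its arguments agree is alternating, $\varphi_0$ restricts to an alternating $3$-form on $\imo$.

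I do not expect a real obstacle here: the only substantive ingredient is the orthogonality relation $B(u\times v,v)=0$, and the one point that needs a little care is to keep the scalar summand $B(u,v)\cdot1$ in the formula for the cross product and to invoke $B(1,v)=0$, which is valid precisely because the arguments are restricted to $\imo$. (Alternatively one could note that $\varphi_0(u,v,w)=-B(vu,w)$ on $\imo$ and derive antisymmetry in $v,w$ directly from the polarized composition identity, but the diagonal-vanishing route above seems cleanest.)
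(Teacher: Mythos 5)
Your proof is correct and follows essentially the same route as the paper: trilinearity is immediate, $\varphi_0(u,u,\cdot)=0$ comes from $u\times u=0$, and the remaining diagonal vanishing is reduced via $\conj v=-v$, $u,v\perp 1$ to the multiplicativity identity of Proposition \ref{prop:ortmult}. The only (immaterial) difference is that you compute $\varphi_0(u,v,v)=0$ directly and deduce $\varphi_0(u,v,u)=0$ from skew-symmetry of $\times$, whereas the paper computes $\varphi_0(u,v,u)=0$ directly and treats the other case as analogous.
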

\begin{proof}
Multi-linearity over $\cc$ is a trivial matter to check.
To show that it is an alternating form, first note that $u\times u = 0$ implies $\varphi_0(u,u,v)=0$.

Next, we check $\varphi_0(u,v,u)=0$.
\begin{eqnarray*}
  \varphi_0(u,v,u) &=& B(u\times v, u) \\
  &=& B(\conj vu -B(u,v),u) \\
  \text{(as $u\perp 1$) } &=& B(\conj vu,u) \\
  \text{(by Proposition \ref{prop:ortmult})} &=& B(\conj v,1)Q(u) \\
  \text{(as $v\perp 1$) } &=& 0
 \end{eqnarray*}
 Similarly, (using $\conj v = -v$) we have $\varphi_0(u,v,v)=0$
\end{proof}

Next, we would like to give an alternative description of $\cg$ as the stabilizer of $\varphi_0$ in $\SL(\im\oo)$.
Since $\cg$ acts trivially on $\reo$ and preserves $\imo$, we identify an element of $\cg$ with a linear transformation on $\imo$ and vice versa.
\begin{proposition}
Let $G=\left\{ A\in \SL(\im\oo) | A\pb\varphi_0=\varphi_0 \right\}$.
Then $G=\cg$.
\label{prop:g2stabilizerdefinition}
\end{proposition}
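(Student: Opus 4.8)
The plan is to prove the two inclusions $\cg\subseteq G$ and $G\subseteq\cg$ separately: the first is essentially formal, and the second rests on the principle that a generic $3$-form in dimension $7$ reconstructs its metric.

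For $\cg\subseteq G$ I would argue as follows. Given $A\in\cg=\mt{Aut}(\oo)$, Proposition~\ref{prop:ort} together with the remark following it shows that $A$ restricts to an element of $\SO(\imo,B)$, in particular of $\SL(\imo)$. Since $A$ is an algebra map and conjugation and the projections $\re\cdot,\im\cdot$ are $\cg$-equivariant, one has $A(u\times v)=A(\im{\conj v\,u})=\im{\conj{Av}\,Au}=Au\times Av$; combined with $B(Au,Av)=B(u,v)$ this gives $(A\pb\varphi_0)(u,v,w)=B(A(u\times v),Aw)=B(u\times v,w)=\varphi_0(u,v,w)$, so $A\in G$.

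For the reverse inclusion, the key step is that $\varphi_0$ recovers $B|_{\imo}$ up to a nonzero scalar. Following Bryant \cite{Bry05}, I would introduce the $\Lambda^7(\imo)\dual$-valued symmetric bilinear form
\begin{equation*}
\mathsf{B}_\varphi(u,v)=\tfrac16\,(\iota_u\varphi)\wedge(\iota_v\varphi)\wedge\varphi ,
\end{equation*}
a $\GL(\imo)$-natural polynomial expression in $\varphi\in\Lambda^3(\imo)\dual$. Fixing a volume form $\mu$ and writing $\mathsf{B}_{\varphi_0}=b\,\mu$ with $b$ a symmetric bilinear form on $\imo$, naturality of $\mathsf{B}_\bullet$ implies that any $A\in\GL(\imo)$ with $A\pb\varphi_0=\varphi_0$ satisfies $(\det A)\,A\pb b=b$. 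Applying this to $A\in\cg$, where $\det\equiv1$ by the remark after Proposition~\ref{prop:ort}, shows that $b$ is $\cg$-invariant. Since any octonion algebra over $\cc$ is split (cf.~\cite{SV13}), $\varphi_0$ is $\GL(7,\cc)$-equivalent to the complexification of the standard $G_2$ $3$-form and hence lies in the open $\GL(7,\cc)$-orbit, so $b$ is nondegenerate; because $\imo$ is an irreducible self-dual $\cg$-module and $B|_{\imo}$ is a nonzero $\cg$-invariant form on it, Schur's lemma gives $b=c\,B|_{\imo}$ for some $c\in\cc\pb$. Now for $A\in G$ we have $\det A=1$, so $A\pb b=b$, and therefore $A$ preserves $B|_{\imo}$.

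Finally, since $A\in G$ preserves both $\varphi_0$ and the nondegenerate form $B|_{\imo}$, the defining relation $B(u\times v,w)=\varphi_0(u,v,w)$ forces $A(u\times v)=Au\times Av$ for all $u,v\in\imo$. I would then extend $A$ to $\wh A\colon\oo\to\oo$ by letting it fix $\reo$, and use $uv=-B(u,v)\cdot1+u\times v$ for $u,v\in\imo$ together with $1\cdot u=u\cdot1=u$: expanding a general product $(a+u)(a'+v)$ with $a,a'\in\reo$ shows that $\wh A$ is multiplicative, hence $\wh A\in\mt{Aut}(\oo)=\cg$ and restricts to $A$, so $A\in\cg$. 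I expect the main obstacle to be the reconstruction of $B|_{\imo}$ from $\varphi_0$ — that is, Bryant's metric formula and the genericity of $\varphi_0$ over $\cc$; the Schur-lemma identification and the extension-of-$A$ computation are routine bookkeeping.
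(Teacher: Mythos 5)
Your proof is correct, and its overall skeleton is the same as the paper's (and Bryant's): show $\cg\subseteq G$ via equivariance of the cross product, then show any $A\in G$ preserves $B$, deduce $G$-equivariance of the cross product from nondegeneracy of $B$, and reconstruct multiplicativity from $uv=-B(u,v)\cdot 1+u\times v$. Where you genuinely diverge is the key step that $\varphi_0$ recovers $B$: the paper simply verifies the explicit identity $(\iota(u)\varphi_0)\wedge(\iota(v)\varphi_0)\wedge\varphi_0=6B(u,v)\vol$ by computation in the standard basis and pulls it back by $A\in G\subseteq\SL(\imo)$, which is self-contained and even pins the exact normalization $b=B$ (the identity is reused later in the paper); you instead get $b=cB$, $c\neq 0$, abstractly from naturality of the Bryant form together with $\cg$-invariance of $b$, Schur's lemma, and nondegeneracy via the open-orbit fact. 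Your route avoids the basis computation but imports two external inputs the paper does not need: irreducibility (and self-duality) of $\imo$ as a $\cg$-module, and the split/open-orbit genericity statement from \cite{SV13} and \cite{Bry87}. Note also that for your argument only $b\neq 0$ is required (Schur then forces $c\neq 0$ since $B$ is nondegenerate), and $b\neq 0$ follows from a one-line evaluation such as $\iota(e_1)\varphi_0\wedge\iota(e_1)\varphi_0\wedge\varphi_0\neq 0$, so the appeal to the open $\GL(7,\cc)$-orbit could be dropped entirely; your determinant bookkeeping $(\det A)\,A\pb b=b$ and the use of $\det=1$ on $\imo$ (from connectedness of $\cg$, since $\cg$ fixes $\reo$) are correct as stated.
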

\begin{proof}
First note that for $A\in\cg$,
\begin{eqnarray*}
  A(u\times v) &=& A\im{\conj vu} \\
  &=& \im{A(\conj vu)} \\
  &=& \im{A\conj v Au} \\
  &=& \im{\conj{Av}Au} \\
  &=& (Au)\times (Av).
\end{eqnarray*}
So, 
\begin{eqnarray*}
  \varphi_0(Au,Av,Aw) &=& B(Au\times Av, Aw) \\
  &=& B(A(u\times v),Aw) \\
  &=& B(u\times v, w) \\
  &=& \varphi_0(u,v,w).
\end{eqnarray*}
That is, $\cg\le G$.

For the converse statement, we adapt Bryant's argument in \cite{Bry87}.
First, fix a basis $(e_i)$ for $\imo$ and set $e_0=1$, then it is easy to verify that 
\begin{equation}
(\iota(u)\varphi_0)\wedge(\iota(v)\varphi_0)\wedge\varphi_0 = 6B(u,v)\vol
  \label{eq:metricvol}
\end{equation}
holds for all $u,v\in\imo$ where $\iota(u)\varphi_0$ is the contraction of $\varphi_0$ with $u$, $\vol=e^{12\dots7}=e^1\wedge e^2\wedge\dots\wedge e^7$ and $(e^i)$ is the basis of $\oo^*$ dual to $(e_i)$.
Thus, for $A\in G$, we have
\begin{eqnarray*}
  A\pb\left((\iota(u)\varphi_0)\wedge(\iota(v)\varphi_0)\wedge\varphi_0\right) &=& A\pb\left( 6B(u,v)\vol\right) \\
  \iota(Au)(A\pb\varphi_0)\wedge\iota(Av)(A\pb\varphi_0)\wedge(A\pb\varphi_0)&=& 6B(u,v)A\pb\vol \\
  \iota(Au)\varphi_0\wedge\iota(Av)\varphi_0\wedge\varphi_0&=& 6B(u,v)\vol \\
  6B(Au,Av)\vol &=& 6B(u,v)\vol \\
  B(Au,Av) &=& B(u,v).
\end{eqnarray*}
Thus, $A\in \mt {O}(\oo,B)$.
Since $A$ preserves $B$ and $\varphi_0$, 
\begin{eqnarray*}
  B(A(u\times v), w) &=& B(A(u\times v), AA\inv w) \\
  &=& B(u\times v, A\inv w) \\
  &=& \varphi_0(u,v,A\inv w) \\
  &=& \varphi_0(Au,Av,AA\inv w) \\
  &=& B(Au\times Av, w)
\end{eqnarray*}
for all $w$, that is, the cross product is $G$-equivariant.
For $u,v\in \imo$,
\begin{eqnarray*}
  A(vu) &=& A(\re{vu}+\im{vu}) \\
  &=& A\re{vu}+A\im{vu} \\
  &=& \re{vu}+A(u\times \conj v) \\
  &=& B(\conj v, u)+Au\times A\conj v \\
  &=& B(A\conj v, Au)+\im{\conj{A\conj v} Au} \\
  &=& \re{\conj{A\conj v}Au}+\im{\conj{A\conj v} Au} \\
  &=& \re{A vAu}+\im{Av Au} \\
  &=& AvAu.
\end{eqnarray*}
Now, for $\alpha,\beta\in \reo$,
\begin{eqnarray*}
  A\left( (\alpha+u)(\beta+v) \right) &=& A\left( \alpha\beta+\alpha v+\beta u +uv \right) \\
  &=& A\alpha A\beta + A\alpha Av + A\beta Au + AuAv \\
  &=& A(\alpha+u)A(\beta+v).
\end{eqnarray*}
Hence, $G\le\cg$ and $G=\cg$.
\end{proof}

\subsection{Alternating three-forms in seven-space}\label{sect:altthreeforms}
Let $V$ be a seven-dimensional vector space over $\cc$.
\begin{definition}
An alternating three form $\varphi \in \Lambda^3 V\dual$ is called non-degenerate if for every pair of linearly independent vectors $(u,v)$ there exists $w\in V$ such that
\begin{equation}
  \varphi(u,v,w)\neq 0.
\end{equation}
\end{definition}
\begin{example}
  $\varphi_0$ is a non-degenerate three-form on $\imo$.
\end{example}

If $\varphi$ is a non-degenerate three-form and $u\neq 0$, then $\iota(u)\varphi$ induces a symplectic form on $V/\langle u\rangle$.
Hence, we can choose a symplectic basis on $V/\langle u\rangle$ which we can pull back to $v_i,w_i\in V$ for $i=1,2,3$.
Together these vectors satisfy $\varphi(u,v_i,w_i)=1$ for $i=1,2,3$.
Note that 
\begin{equation}
  \left( \iota(u)\varphi\wedge\iota(u)\varphi\wedge\varphi \right)
  \left( u,v_1,w_1,v_2,w_2,v_3,w_3 \right)
  = \left( (\iota(u)\varphi)^{\wedge3} \right)\left( v_1,w_1,v_2,w_2,v_3,w_3 \right) \neq 0.
  \label{eq:nonVanishingTopForm}
\end{equation}

For the rest of the discussion, we fix an $n$-form $\Omega \in \Lambda^nV\dual$.
Let $x_1,\dots,x_7$ be a basis of $V$ with the dual basis $x^{1},\dots,x^{7}$ satisfying $x^1\wedge\dots\wedge x^7=\Omega$.
Define $b_{ij}$ by
\begin{equation}
  \iota(x_i)\varphi\wedge\iota(x_j)\varphi\wedge\varphi = 6b_{ij}x^{1\dots7}
\end{equation}
where $x^{1\dots7}=x^1\wedge\dots\wedge x^7$.
We think of $(b_{ij})$ as a symmetric matrix.
Using this matrix, we define a symmetric bilinear form by $B(u,v) = u^ib_{ij}v^j$ where $u=u^ix_i$ and $v=v^ix_i$.
\begin{proposition}
  The symmetric bilinear form $B$ is well-defined and non-degenerate.
\end{proposition}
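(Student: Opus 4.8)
The plan is to establish the two assertions in turn: well-definedness is essentially formal, and non-degeneracy is an immediate consequence of the non-vanishing statement \eqref{eq:nonVanishingTopForm} proved just above. For well-definedness, I would first note that the assignment $(u,v)\mapsto \iota(u)\varphi\wedge\iota(v)\varphi\wedge\varphi$ is an intrinsic (basis-free) map $V\times V\to\Lambda^7 V\dual$: it is $\cc$-bilinear because $u\mapsto\iota(u)\varphi$ is linear, and symmetric because $\iota(u)\varphi$ and $\iota(v)\varphi$ are $2$-forms and hence commute under $\wedge$. Since $\Lambda^7 V\dual$ is one-dimensional and $\Omega\neq 0$ spans it, there is a unique scalar $B(u,v)$ with $\iota(u)\varphi\wedge\iota(v)\varphi\wedge\varphi=6\,B(u,v)\,\Omega$, and $(u,v)\mapsto B(u,v)$ is then a symmetric bilinear form with $b_{ij}=B(x_i,x_j)$. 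In particular $B$ depends only on $\varphi$ and $\Omega$, not on the auxiliary basis $x_1,\dots,x_7$; concretely, passing to another admissible basis (one with the same associated top form $\Omega$) amounts to acting by some $A\in\SL(V)$, under which $(b_{ij})\mapsto A^{\T}(b_{ij})A$ while $B$ itself is unchanged.

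For non-degeneracy it suffices to show that $B(u,u)\neq 0$ for every $u\neq 0$ (which is stronger than triviality of the radical). Fix $u\neq 0$. Since $\varphi$ is non-degenerate, $\iota(u)\varphi$ descends to a non-degenerate $2$-form $\bar\omega$ on the six-dimensional quotient $V/\langle u\rangle$, that is, a symplectic form; choose a symplectic basis $\{[v_i],[w_i]\}_{i=1,2,3}$ of $V/\langle u\rangle$ and lift it to vectors $v_i,w_i\in V$. Then $\{u,v_1,w_1,v_2,w_2,v_3,w_3\}$ is a basis of $V$ — it projects to a basis of $V/\langle u\rangle$ while $u$ spans the kernel of the projection — so evaluating the $7$-form on this basis and using \eqref{eq:nonVanishingTopForm},
\begin{align*}
\left(\iota(u)\varphi\wedge\iota(u)\varphi\wedge\varphi\right)(u,v_1,w_1,v_2,w_2,v_3,w_3) &= \left((\iota(u)\varphi)^{\wedge 3}\right)(v_1,w_1,v_2,w_2,v_3,w_3) \\
&\neq 0,
\end{align*}
because $(\iota(u)\varphi)^{\wedge 3}$ agrees, on these lifts, with the top exterior power $\bar\omega^{\wedge 3}$ of a symplectic form on a six-dimensional space, which is non-zero. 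Hence $\iota(u)\varphi\wedge\iota(u)\varphi\wedge\varphi\neq 0$ in $\Lambda^7 V\dual$, so $6\,B(u,u)\,\Omega\neq 0$, i.e.\ $B(u,u)\neq 0$. Therefore $B$ is non-degenerate.

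I do not expect a serious obstacle here: the whole argument hinges on the identity packaged in \eqref{eq:nonVanishingTopForm} — morally $\iota(u)\bigl(\iota(u)\varphi\wedge\iota(u)\varphi\wedge\varphi\bigr)=(\iota(u)\varphi)^{\wedge 3}$ — together with the fact that $\iota(u)\varphi$ remains symplectic on $V/\langle u\rangle$, which is precisely the non-degeneracy hypothesis on $\varphi$. The only points needing care are routine bookkeeping: that the lifted collection $\{u,v_i,w_i\}$ really is a basis of $V$, so that non-vanishing of the top form on that tuple forces the form itself to be non-zero, and — if one also wants to record how $B$ varies — the transformation law $B_{A\dual\varphi}(u,v)=\det(A)\,B_\varphi(Au,Av)$ for $A\in\GL(V)$, which makes the naturality of the construction (once $\Omega$ is fixed) transparent.
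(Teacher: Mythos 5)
Your proposal is correct, and it rests on exactly the same key input as the paper's proof: the non-vanishing statement (\ref{eq:nonVanishingTopForm}), which forces $B(u,u)\neq 0$ for every $u\neq 0$. The differences are in execution rather than substance. For well-definedness the paper does the explicit change-of-basis computation: writing $y_i=L_i^jx_j$ with $y^{1\dots7}=x^{1\dots7}=\Omega$, it verifies $L_i^k b_{kl} L_j^l=c_{ij}$ and concludes the two coordinate expressions agree; you instead note that $(u,v)\mapsto\iota(u)\varphi\wedge\iota(v)\varphi\wedge\varphi$ is an intrinsic symmetric bilinear map into the one-dimensional space $\Lambda^7V\dual$ spanned by $\Omega$, so $B$ is defined basis-freely and $(b_{ij})$ is merely its Gram matrix. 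That is equivalent but cleaner, and it makes the dependence on the normalization of $\Omega$ (and your aside on the $\det(A)$ transformation law) transparent. For non-degeneracy the paper argues through eigenvectors of $(b_{ij})$, concluding every eigenvalue is non-zero, while you show directly that the quadratic form $u\mapsto B(u,u)$ has no non-trivial zeros; this is marginally stronger and sidesteps the mild delicacy that over $\cc$ the quantity $\sum_i(u^i)^2$ appearing in the eigenvector computation could a priori vanish (the paper's argument still closes, but yours does not need to address it). Nothing is missing from your argument.
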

\begin{proof}
First, we consider degeneracy.
Let $u=u^ix_i$ be an eigenvector of $b_{ij}$ with eigenvalue $\lambda$.
Then,
\begin{eqnarray*}
  \iota(u)\varphi\wedge\iota(u)\varphi\wedge\varphi &=& 
  u^iu^j \iota(x_i)\varphi\wedge\iota(x_j)\varphi\wedge\varphi \\
  &=& 6u^iu^jb_{ij}x^{1\dots7} \\
  &=& 6\lambda (u^i)^2 x^{1\dots7}.
\end{eqnarray*}
By (\ref{eq:nonVanishingTopForm}), we know that the left hand side does not vanish.
Hence, $\lambda$ is necessarily non-zero and $(b_{ij})$ is non-degenerate.

Next, we show that $B$ is well-defined.
Let $y_1,\dots,y_7$ be another basis with the dual basis $y^1,\dots,y^7$ such that $y^{1\dots7}=\Omega$.
Define $c_{ij}$ by
\begin{equation}
  \iota(y_i)\varphi\wedge\iota(y_j)\varphi\wedge\varphi = 6c_{ij}y^{1\dots7}.
\end{equation}
Define $L^j_i$ by $y_i=L^{j}_ix_j$.
So, we have
\begin{eqnarray*}
  L_i^kL_j^l\iota(x_k)\varphi\wedge\iota(x_l)\varphi\wedge\varphi &=&  6c_{ij}y^{1\dots7} \\
  = L_i^kL_j^l\left( 6b_{kl} x^{1\dots7}\right) &=& 6c_{ij}y^{1\dots7}.
\end{eqnarray*}
Since $x^{1\dots7}=y^{1\dots7}$, we get
\begin{equation}
  L^k_ib_{kl}L^l_j = c_{ij}.
\end{equation}
Thus, if $u=u^iy_i = u^iL^k_ix_k$ and $v=v^iy_i=v^iL_i^kx_k$,
\begin{eqnarray*}
  C(u,v) &=& u^ic_{ij}v^j \\
  &=& u^iL^k_ib_{kl}L^l_jv^j \\
  &=& B(u,v).
\end{eqnarray*}
\end{proof}

Note that if we scale $\Omega$ by $\lambda$, we scale $B$ by $\lambda\inv$. 
Furthermore, $B$ induces a norm on $\Lambda^nV\dual$.
So by scaling $\Omega$, we may require that the norm of $\Omega$ is 1.
We will implicitly assume this for the rest of the article.
\begin{definition}
  We call the quadruple $(V,\varphi,\Omega,B)$ satisfying (\ref{eq:metricvol}) and $N(\Omega)=1$ (where $N$ is the quadratic form associated to $B$) a $G_2$-(vector) space.
  \label{defn:g2space}
\end{definition}
\begin{remark}
  \label{rmk:g2space}
  One can also define real $G_2$-spaces in a similar manner.
  In fact, over $\rr$, $\varphi$ determines both a metric and a volume form uniquely.
  In that case the metric need not be positive definite.
  The 3-form $\varphi$ is called positive if the metric is positive definite.
\end{remark}

\subsection{The complexification of a \texorpdfstring{$G_2$}{G2}-space}
\label{ssec:complexg2space}
In this section, we exhibit the linear version of some constructions starting with a real 7-dimensional vector space with a positive 3-form $\varphi$.
Although it is possible to do a similar construction with any non-degenerate 3-form, in this section and for the rest of the article, we will focus on positive $\varphi$ (see Remark \ref{rmk:g2space}).
Recall that $\varphi$ determines a (real) $G_2$-space $(V,\varphi,\Omega,g)$. 
Let $V_\cc=V\oplus iV$.
Furthermore, we can extend all of the structures complex linearly and the equation (\ref{eq:metricvol}) continues to hold.
This implies that the complexified three-form is still non-degenerate.
Therefore, we get a (complex) $G_2$-space $(V_\cc,\varphi_\cc,\Omega_\cc,g_\cc)$.

We can also extend $g$ as a hermitian form $h$.
Explicitly, we define
\begin{equation*}
  h(x+iy,z+iw) = g(x,z)+g(y,w) + i\left( g(y,z)-g(x,w) \right).
\end{equation*}
Then, the real part of $h$ is a positive definite metric and the imaginary part is a symplectic form $\omega$ on $V_\cc$.

If $V$ is a half dimensional subspace of $W$ with an almost complex structure $J$ such that $V\oplus JV = W$, we could use $J$ in place of $i$ in the above construction.
This flexibility will be important later on.

\subsection{Compatible structures on a \texorpdfstring{$\cg$}{complex G2}-space}
\label{ssec:compatstr}
K\"ahler geometry is often said to be at the intersection of Riemannian geometry, symplectic geometry and complex geometry because it comes with these three structures that are compatible with each other.
Moreover, any (compatible) two of those structures determines the third one.
At the group level, we can state this as follows
\begin{equation}
  \GL(n,\cc)\cap \mt O(2n)= \mt O(2n)\cap \Sp(2n) = \Sp(2n)\cap \GL(n,\cc) = \U(n),
  \label{eq:kahlerintersection}
\end{equation}
see \cite{MS17}.
Our construction (see subsection \ref{ssec:complexg2space}) of a positive-definite metric $g$, a symplectic form $\omega$ and a (complex) non-degenerate three-form $\varphi_\cc$ from a given (real) non-degenerate three-form $\varphi$ allows us to talk about compatibility between these structures related to $G_2$ geometry.
In this section, we describe this relation for a complex 7-dimensional vector space $(V,J)$.

\begin{definition}
  \label{defn:compatiblelinear}
We say that the triple $(g,\omega,\varphi_\cc)$ is compatible if there is a real 7 dimensional subspace $\Lambda$ of $V$ and a positive $\varphi$ on $\Lambda$ (determining a metric $g'$ on $\Lambda$) such that 
\begin{enumerate}
  \item $V=\Lambda\oplus J\Lambda =: \Lambda_\cc$
  \item $\varphi_\cc$ is the complex linear extension of $\varphi$
  \item $g+i\omega$ is the hermitian extension of $g'$.
\end{enumerate}
In this case, we say they are induced from $(\Lambda,\varphi,J)$.
\end{definition}

Let the stabilizer of $\varphi$ in $\GL(7,\rr)$ be $G_2$ and $\varphi_\cc$ be its complex extension to $\cc^7$.
Then, $A\in G_2$ is of determinant 1, commutes with $i$ and therefore, fixes $\varphi_\cc$.
In other words, $G_2\subset \cg$.
\begin{proposition}
  $$ \cg \cap \U(7) = G_2$$ 
  \label{prop:compactsubgroupofg2c}
\end{proposition}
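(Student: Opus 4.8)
The plan is to prove the two inclusions $G_2 \subseteq \cg \cap \U(7)$ and $\cg \cap \U(7) \subseteq G_2$ separately. The first inclusion is essentially already observed in the paragraph preceding the statement: an element $A \in G_2$ has determinant $1$, commutes with $J = i$ (hence lies in $\U(7)$ once we fix the standard Hermitian structure on $\cc^7$), and fixes $\varphi_\cc$ because it fixes $\varphi$; by Proposition \ref{prop:g2stabilizerdefinition} (in the form applicable to $\varphi_\cc$) it therefore lies in $\cg$. So the content is in the reverse inclusion. The strategy there is: given $A \in \cg \cap \U(7)$, I want to produce a real $7$-dimensional form $\varphi$ (equivalently, identify $\imo$ with $\rr^7 \otimes \cc$ for a real structure that $A$ preserves) such that $A$ restricts to an element of the real stabilizer $G_2 = \mathrm{Stab}_{\GL(7,\rr)}(\varphi)$.

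The key steps, in order, are as follows. First, set up the relevant bilinear data: the complex symmetric form $B$ on $\imo \cong \cc^7$ (from Proposition \ref{prop:ort}, $\cg \le \mathrm{O}(\oo,B)$, so $A$ preserves $B$), together with the standard Hermitian form $h$ on $\cc^7$ coming from a chosen real structure; $A \in \U(7)$ means $A$ preserves $h$. Second, combine these: $A$ preserves both the $\cc$-bilinear $B$ and the Hermitian $h$, and therefore preserves the conjugate-linear map $c : \cc^7 \to \cc^7$ defined by $h(u,v) = B(\conj{c(u)}, v)$ — i.e. the composition that sends a vector to ``its $B$-dual of its $h$-conjugate.'' One checks $c$ is a conjugate-linear involution (here positive-definiteness of $\re h$, established in subsection \ref{ssec:complexg2space}, is what forces $c^2 = \mathrm{I}$ rather than $-\mathrm{I}$), so its $+1$ eigenspace is a real $7$-dimensional subspace $\Lambda$ with $\Lambda \oplus i\Lambda = \cc^7$, on which $B$ restricts to a real positive-definite form. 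Third, since $A$ commutes with $c$, it preserves $\Lambda$, so $A|_\Lambda \in \GL(7,\rr)$. Fourth, restrict $\varphi_\cc$ to $\Lambda$: because $A \in \cg$ fixes $\varphi_\cc$ and preserves $\Lambda$, the real $3$-form $\varphi := \varphi_\cc|_\Lambda$ is fixed by $A|_\Lambda$; and $\varphi$ is positive because the metric it induces on $\Lambda$ agrees (up to normalization) with $B|_\Lambda$, which is positive-definite, using (\ref{eq:metricvol}) / the $G_2$-space axioms of Definition \ref{defn:g2space} applied to $\Lambda$. Hence $A|_\Lambda \in \mathrm{Stab}_{\GL(7,\rr)}(\varphi) \cong G_2$ (using that the real stabilizer of a positive $3$-form is conjugate to the standard $G_2$, which is $\cg \cap \GL(7,\rr)$ in the obvious sense), and one checks this identifies $A$ with an element of $G_2 \subset \cg$.

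The main obstacle I expect is the third/fourth step fused together: showing that the real structure $c$ built from $h$ and $B$ actually interacts correctly with the octonion data so that the fixed subspace $\Lambda$ is a $G_2$-subspace in the sense of Definition \ref{defn:compatiblelinear} — concretely, that $\varphi_\cc|_\Lambda$ is positive definite and that $A$ genuinely lands in a \emph{conjugate of the standard} $G_2$ rather than merely stabilizing \emph{some} positive real $3$-form on \emph{some} real slice. One has to be careful that the real slice $\Lambda$ chosen via $h$ is compatible with $B$ (Proposition 6 of the excerpt, relating $g_\cc(\xi u, \xi v)$ to $\tfrac12 B(u,v)$, is the linear-algebra identity that makes this work) and that the normalization $N(\Omega) = 1$ from Definition \ref{defn:g2space} is respected. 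Once the real slice and its positive $3$-form are correctly identified and shown $A$-invariant, the conclusion $A \in G_2$ is immediate from the defining property of $G_2$ as the stabilizer of a positive $3$-form; the reverse containment $G_2 \subseteq \cg \cap \U(7)$ then completes the equality.
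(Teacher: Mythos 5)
Your proof is correct, and at its core it is the same argument as the paper's, packaged invariantly. The paper's reverse inclusion uses exactly your two ingredients: $\cg\le \mt O(\oo,B)$ (Proposition \ref{prop:ort}), and the observation that $\U(7)\cap \mt O(7,\cc)=\mt O(7,\rr)$, since $A\inv=\conj A^{\T}=A^{\T}$ forces $\conj A=A$; a real matrix fixing $\varphi_\cc$ then fixes $\varphi$, hence lies in $G_2$. Your antilinear map $c$ is precisely the structural version of that one-line matrix identity: because $h$ and $B$ here are the standard Hermitian and complex-bilinear extensions of the \emph{same} real inner product $g$ on the \emph{same} real slice, $c$ is literally complex conjugation $u+iv\mapsto u-iv$ ($u,v\in\rr^7$), so $\Lambda=\rr^7$, $\varphi_\cc|_{\Lambda}=\varphi$ is the given positive form (not merely some positive form on some slice), and ``$A$ commutes with $c$'' just says $A$ is a real matrix. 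This also disposes of the ``main obstacle'' you flag: there is no conjugacy ambiguity, since the slice your construction produces is the standard one and the stabilizer you land in is the specific $G_2$ of the statement; once this is noted, your argument collapses to the paper's. Two small imprecisions, harmless here but worth fixing: the defining formula should read $h(u,v)=B(c(u),v)$ with $c$ antilinear (as written, $B(\conj{c(u)},v)$ is bilinear in $u$ and cannot equal a sesquilinear $h$), and the dichotomy ``positivity of $\re h$ forces $c^2=\id$ rather than $-\id$'' is not quite the right statement in general — for symmetric $B$ one gets that $c^2$ is a positive operator, and it equals the identity because $h$ and $B$ are normalized by the same $g$; in the present concrete situation both points are immediate since $c$ is plain conjugation. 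Relative to the paper, your route buys coordinate-freeness at the cost of length; the paper's direct computation reaches ``$A$ is real'' in one step.
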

\begin{proof}
  We have already seen that $ G_2\subset \cg$.
  Since $ G_2\subset \mt{O}(7,\rr)\subset \U(7)$, $ G_2\subset\cg\cap\U(7)$.

  For the converse, first note that $\U(7)\cap \mt O(7,\cc)=\mt O(7,\rr)$ since a matrix whose inverse is both its conjugate transpose and transpose, must be a real matrix.
  Therefore, by Proposition \ref{prop:ort}, $\cg\cap\U(7)\subset \mt O(7,\rr)$.
  So, the intersection consist of real $7\times 7$ matrices preserving $\varphi_\cc$.
  In particular, they preserve $\varphi$ and we get 
$$\cg\cap \mt U(7) = G_2.$$
\end{proof}

Now, using (\ref{eq:kahlerintersection}) and Proposition \ref{prop:compactsubgroupofg2c}, it is easy to see that we have
$$ \cg\cap \mt O(14) = \cg \cap Sp(14) = G_2. $$

We will need the following technical lemma later.
\begin{lemma}
  Given a symplectic form $\omega$ on $\rr^{14}$, a Lagrangian subspace $\Lambda$ and a positive 3-form $\varphi$ on $\Lambda$, let $\mc J(\omega,\Lambda,\varphi)$ be the space of almost complex structures $J$
  such that the triple $(g',\omega',\varphi_\cc)$ induced from $(\Lambda,\varphi,J)$ satisfies
  \begin{enumerate}
    \item $\omega=\omega'$,
    \item $g'|_\Lambda=g$, and
    \item $\varphi_\cc|_\Lambda = \varphi$
  \end{enumerate}
  where $g$ is the metric on $\Lambda$ induced from $\varphi$.
  Then, $\mc J(\omega,\Lambda,\varphi)$ is contractible.
  \label{lem:technical}
\end{lemma}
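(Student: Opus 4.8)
The plan is to identify $\mc J(\omega,\Lambda,\varphi)$ with a space of linear maps and then contract that space. Fix the decomposition $\rr^{14}$ into $\Lambda$ and a complementary Lagrangian $\Lambda'$, so that $\omega$ gives a pairing identifying $\Lambda' \cong \Lambda\dual$. Any $J \in \mc J(\omega,\Lambda,\varphi)$ must send $\Lambda$ to a complement of $\Lambda$ (since $V = \Lambda \oplus J\Lambda$ is required), so $J|_\Lambda$ is the graph of an invertible linear map $S:\Lambda \to \Lambda'$ together with an endomorphism part; writing $J$ in block form with respect to $\Lambda \oplus \Lambda'$ and imposing $J^2 = -\id$ will express the lower-left block of $J$ and the whole of $J|_{\Lambda'}$ in terms of the upper blocks. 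The three conditions then translate into: condition (3) forces the map $\Lambda \to V \to V/\Lambda \cong J\Lambda$ composed back with $\xi$ to recover $\varphi$, which is automatic once $J\Lambda$ is a complement; condition (2) similarly says $g'|_\Lambda = g$ is automatic; and condition (1) $\omega = \omega'$ is the real constraint. So the first step is to unwind Definition \ref{defn:compatiblelinear} carefully and see which of (1)--(3) are vacuous and which cut out the actual parameter space.

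The key reduction I expect is that $\mc J(\omega,\Lambda,\varphi)$ becomes the set of symmetric, positive-definite endomorphisms of $\Lambda$ (or a fiber bundle over such with contractible fibers). Concretely: given $J$, the metric $g' = \re(g+i\omega)$ restricted to all of $V$ is determined, and its restriction to $\Lambda$ must equal $g$; meanwhile $\omega(u, Jv)$ for $u,v \in \Lambda$ should reproduce $g(u,v)$, and $\omega$ paired against $\Lambda'$ is fixed. Using $\omega$ to identify $\Lambda' = \Lambda\dual$, the data of $J$ reduces to the block $\Lambda \to \Lambda\dual$, which is a bilinear form on $\Lambda$; the conditions force it to be symmetric and positive-definite (this is exactly the familiar description of compatible complex structures taming a symplectic form with a fixed Lagrangian, as in \cite{MS17}). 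The space of symmetric positive-definite forms on a fixed vector space is convex, hence contractible, which finishes the argument. The main obstacle is verifying that the $\varphi$-compatibility condition (3) genuinely imposes no further constraint beyond those coming from the K\"ahler package $(g,\omega,J)$ — i.e. that once $J$ tames $\omega$ with Lagrangian $\Lambda$ and restricts to $g$ on $\Lambda$, the complex-linear extension of $\varphi$ automatically restricts to $\varphi$ on $\Lambda$. This should follow because $\varphi_\cc|_\Lambda = \varphi$ by construction of the complex-linear extension regardless of $J$, making (3) automatic; but one has to be careful that the ``extension'' in Definition \ref{defn:compatiblelinear}(2) is taken relative to the decomposition $V = \Lambda \oplus J\Lambda$, so that changing $J$ changes what ``complex linear extension'' means — nonetheless its restriction to $\Lambda$ is always $\varphi$.

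If the reduction to positive-definite symmetric forms is not completely clean, the fallback is to show directly that $\mc J(\omega,\Lambda,\varphi)$ is nonempty and star-shaped (or to exhibit an explicit deformation retraction onto a single point). Nonemptiness follows from subsection \ref{ssec:complexg2space}: start with the metric $g$ on $\Lambda$ induced by $\varphi$, pick the standard complementary Lagrangian, and build $J$ by the hermitian-extension recipe, checking it lands in $\mc J(\omega,\Lambda,\varphi)$. For contractibility, linearly interpolate the defining symmetric positive-definite form toward the identity (which corresponds to the ``standard'' $J$ compatible with an orthonormal frame), noting that positive-definiteness and symmetry are preserved along the segment and that the resulting family of $J$'s stays inside $\mc J(\omega,\Lambda,\varphi)$ because all three conditions are preserved. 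Either way, the topological heart of the proof is the convexity of the space of compatible complex structures, and the $G_2$-theoretic conditions (2)--(3) turn out to be along for the ride.
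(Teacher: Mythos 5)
The way you sort the three conditions is fine and matches the paper: for a triple \emph{induced} from $(\Lambda,\varphi,J)$, conditions (2) and (3) hold by construction of the hermitian/complex-linear extension, and the whole content is $\omega'=\omega$, which is equivalent to ``$J$ is $\omega$-compatible and $\omega(x,Jy)=g(x,y)$ for $x,y\in\Lambda$'' --- exactly the paper's reduction to its auxiliary space $\mc J(\omega,\Lambda,g)$. The gap is in your key reduction. With respect to $\rr^{14}=\Lambda\oplus\Lambda'$ and the identification $\Lambda'\cong\Lambda\dual$ via $\omega$, the block of $J$ from $\Lambda$ to $\Lambda\dual$ is \emph{not} the free parameter: the condition $\omega(x,Jy)=g(x,y)$ on $\Lambda$ forces that block to be exactly $g$, so it carries no moduli. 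The ``positive-definite symmetric forms'' picture you quote from McDuff--Salamon is the complementary slice: it parametrizes compatible $J$ with a \emph{fixed} complement $J\Lambda=\Lambda\dual$ by the variable metric $\omega(\cdot,J\cdot)|_\Lambda$. In the lemma the metric on $\Lambda$ is fixed and the complement $J\Lambda$ varies; concretely, in an orthonormal $\omega$-standard basis the elements of $\mc J(\omega,\Lambda,g)$ are precisely
\begin{equation*}
J=\begin{pmatrix} -B & -\id-B^2 \\ \id & B \end{pmatrix},\qquad B=B^\T \ \text{arbitrary},
\end{equation*}
an affine space of symmetric matrices (equivalently, the space of Lagrangian complements $J\Lambda$ of $\Lambda$), not a convex cone of positive-definite forms and not determined by the $\Lambda\to\Lambda\dual$ block. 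So the convexity you invoke is convexity of the wrong space; taken literally, your identification (block equals $g$, block determines $J$) would collapse $\mc J(\omega,\Lambda,\varphi)$ to a point, which is false.

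The fallback also fails as written. If ``the defining symmetric positive-definite form'' means $P=-J_{2n}J=\begin{pmatrix}\id & B\\ B & \id+B^2\end{pmatrix}$, then the straight line from $P$ to $\id$ stays symmetric positive definite but is not symplectic (its lower-right block is $\id+tB^2$ while symplecticity, i.e.\ $J_t^2=-\id$ together with $\omega$-invariance, requires $\id+t^2B^2$), so the corresponding family consists of maps that are not almost complex structures at all; your claim that ``all three conditions are preserved'' is exactly the unproved and false step. This is precisely why the paper's contraction is the path $P_t=\begin{pmatrix}\id & tB\\ tB^\T & \id+t^2BB^\T\end{pmatrix}$: one interpolates linearly in the off-diagonal parameter $B$ and quadratically in the diagonal block. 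Once the parameter is identified correctly (the symmetric block $B$, equivalently the Lagrangian $J\Lambda$), contractibility does follow from convexity of that linear parameter space and your overall strategy coincides with the paper's; but as written, the heart of the argument --- which space is convex and which path stays inside $\mc J(\omega,\Lambda,\varphi)$ --- is incorrect.
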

In fact, we can state this lemma in more general terms.
Then, the proof will follow from Lemma \ref{lem:technicalgeneralcase}.
\begin{lemma}
  Given a symplectic form $\omega$ on $\rr^{14}$, a Lagrangian subspace $\Lambda$ and a metric $g$ on $\Lambda$, let $\mc J(\omega,\Lambda,g)$ be the space of almost complex structures compatible with $\omega$ 
  and $g(x,y)=w(x,Jy)$ for $x,y\in\Lambda$.
  Then, $\mc J(\omega,\Lambda,g)$ is contractible.
  \label{lem:technicalgeneralcase}
\end{lemma}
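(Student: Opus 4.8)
The plan is to reduce Lemma \ref{lem:technicalgeneralcase} to a standard fact about the contractibility of the space of compatible almost complex structures on a symplectic vector space, and to handle the extra constraint along the Lagrangian $\Lambda$ by a fibration argument. Recall first the classical statement (see \cite{MS17}): on $(\rr^{14},\omega)$ the space $\mc J(\omega)$ of all $\omega$-compatible almost complex structures is non-empty and contractible, and in fact deformation retracts onto a point via the polar decomposition: given any $J_0\in\mc J(\omega)$, for $J\in\mc J(\omega)$ one writes $J=J_0 e^{S}$ for a suitable $J_0$-anti-invariant symmetric map $S$ with respect to the metric $\omega(\cdot,J_0\cdot)$, and $\mc J(\omega)$ is thereby identified with a vector space. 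So the whole content is to show that imposing the condition $g(x,y)=\omega(x,Jy)$ for $x,y\in\Lambda$ cuts out a contractible subspace.

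The key steps, in order. First I would show $\mc J(\omega,\Lambda,g)$ is non-empty: given the Lagrangian $\Lambda$ and the metric $g$ on it, extend $g$ to a metric $\wt g$ on $\rr^{14}$ for which $\Lambda\oplus\omega\text{-orthogonal-complement-construction}$ works, then take the canonical $J$ from the polar decomposition of $\omega$ relative to $\wt g$; one checks that if $\wt g$ is chosen so that $\Lambda$ is $\wt g$-orthogonal to a fixed complementary Lagrangian $\Lambda'$ and $\wt g|_\Lambda=g$, then the resulting $J$ sends $\Lambda$ to $\Lambda'$ and satisfies $\omega(x,Jy)=g(x,y)$ on $\Lambda$. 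Second, and this is the heart of the matter, I would exhibit $\mc J(\omega,\Lambda,g)$ as the total space (or a retract thereof) of an explicit affine/convex bundle. Fix $J_0\in\mc J(\omega,\Lambda,g)$ and set $\Lambda'=J_0\Lambda$, a Lagrangian transverse to $\Lambda$; then $\omega$ identifies $\Lambda'\cong\Lambda^*$ and any $J\in\mc J(\omega)$ is determined by the pair consisting of its restriction data relative to the splitting $\rr^{14}=\Lambda\oplus\Lambda'$. Writing $J$ in block form and imposing $J^2=-\id$, $\omega(J\cdot,J\cdot)=\omega$, and $\omega(\cdot,J\cdot)>0$, the condition "$\omega(x,Jy)=g(x,y)$ on $\Lambda$" fixes the $\Lambda\to\Lambda'$ block to be (essentially) the isomorphism induced by $g$, and the remaining freedom is parametrized by a symmetric positive-definite form on $\Lambda'$ (the "metric in the $\Lambda'$ direction") together with an arbitrary symmetric bilinear "shear" term — i.e. by a convex set. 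Concretely, after using $g$ to trivialize, $\mc J(\omega,\Lambda,g)$ becomes the space of matrices $\begin{pmatrix} P & -(Q+P^2)R^{-1}... \end{pmatrix}$ — rather than chase this, the clean way is: the map $J\mapsto$ (the metric $g_J=\omega(\cdot,J\cdot)$ on all of $\rr^{14}$) realizes $\mc J(\omega,\Lambda,g)$ as the space of $\omega$-compatible metrics restricting to $g$ on $\Lambda$, and the latter is convex (a convex combination of two such metrics is again $\omega$-compatible and restricts to $g$), hence contractible; and $J\mapsto g_J$ is a homeomorphism onto its image by polar decomposition.

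I expect the main obstacle to be the cleanest justification of the last assertion — that the assignment $J\mapsto g_J:=\omega(\cdot,J\cdot)$ is a homeomorphism from $\mc J(\omega,\Lambda,g)$ onto the convex set $\mc M(\omega,\Lambda,g)$ of $\omega$-compatible inner products on $\rr^{14}$ restricting to $g$ on $\Lambda$. Injectivity and continuity are immediate; surjectivity requires showing that for any $\wt g\in\mc M(\omega,\Lambda,g)$ the compatible $J$ produced by polar decomposition (namely $J=A\mt{|A|}^{-1}$ where $\omega(x,y)=\wt g(Ax,y)$) actually satisfies $\omega(x,Jy)=g(x,y)$ for $x,y\in\Lambda$, not merely $\omega(x,Jy)=\wt g(x,y)$ — but these agree on $\Lambda$ precisely because $\wt g|_\Lambda=g$, so this is in fact automatic once one recognizes $g_J=\wt g$. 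The only genuinely delicate point is checking that the polar-decomposition $J$ lands in $\mc J(\omega)$ and depends continuously on $\wt g$; both are standard, but I would spell out that $|A|$ is $\wt g$-self-adjoint positive so $J^2=-\id$ and $J$ is $\omega$-compatible, and continuity follows since $\wt g\mapsto A\mapsto |A|\mapsto J$ is a composition of continuous maps (the square root being continuous on positive-definite symmetric matrices). Having established this homeomorphism, contractibility of $\mc J(\omega,\Lambda,g)$ follows from convexity of $\mc M(\omega,\Lambda,g)$, and then Lemma \ref{lem:technical} follows by specializing $g$ to the metric induced by $\varphi$ on the Lagrangian $\Lambda$ and noting conditions (2),(3) of Lemma \ref{lem:technical} are exactly the defining conditions of $\mc J(\omega,\Lambda,g)$ in that case while (1) is built into the construction of the induced data.
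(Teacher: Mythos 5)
Your argument breaks down at its central step: the set you call $\mc M(\omega,\Lambda,g)$ --- the inner products of the form $g_J=\omega(\cdot,J\cdot)$ with $J$ compatible and $g_J|_\Lambda=g$ --- is \emph{not} convex. In an $\omega$-standard basis extending an orthonormal basis of $\Lambda$, such a metric is a symmetric positive-definite matrix $P=-J_{2n}J$ which must in addition be \emph{symplectic}, $P^\T J_{2n}P=J_{2n}$, and this condition is destroyed by convex combinations. Concretely, for $n=1$ the constrained compatible metrics are exactly $\begin{pmatrix}1 & b\\ b & 1+b^2\end{pmatrix}$, $b\in\rr$ (determinant $1$), and the average of the ones with $b=0$ and $b=1$ has determinant $5/4$, so it is not $\omega(\cdot,J\cdot)$ for any compatible $J$; in general the required identity $(tB_1+(1-t)B_2)^2=tB_1^2+(1-t)B_2^2$ fails. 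Nor can you repair this by enlarging $\mc M$ to \emph{all} inner products restricting to $g$ on $\Lambda$ (that set is convex) and retracting via polar decomposition as in \cite{MS17}: if $\omega=\wt g(A\cdot,\cdot)$ and $J=A|A|^{-1}$, then $\omega(\cdot,J\cdot)=\wt g(\cdot,|A|\cdot)$, and $|A|$ need not act as the identity on $\Lambda$; e.g.\ for $n=1$, $\Lambda=\langle e_1\rangle$, $g(e_1,e_1)=1$, $\wt g=\mathrm{diag}(1,4)$, one gets $|A|=\tfrac12\id$ and $\omega(e_1,Je_1)=\tfrac12\neq g(e_1,e_1)$, so the standard retraction does not preserve the defining constraint of $\mc J(\omega,\Lambda,g)$. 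Thus neither reading of your ``homeomorphism onto a convex set'' goes through, and this is exactly the point where the constraint along $\Lambda$ interacts nontrivially with $\omega$-compatibility.

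The repair is the block computation you sketched and then set aside, and it is the paper's proof. Writing $P=-J_{2n}J=\begin{pmatrix}\id_n & B\\ B^\T & C\end{pmatrix}$ (the upper-left block is $\id_n$ precisely because $\omega(x,Jy)=g(x,y)$ on $\Lambda$ in the chosen basis), symmetry of $P$ together with $P^\T J_{2n}P=J_{2n}$ forces $B=B^\T$ and $C=\id_n+BB^\T$; in particular there is no additional free ``metric in the $\Lambda'$ direction'' as your sketch suggested --- it is determined by the shear $B$. Hence $\mc J(\omega,\Lambda,g)$ is parametrized by the single symmetric block $B$, and the path $P_t=\begin{pmatrix}\id_n & tB\\ tB^\T & \id_n+t^2BB^\T\end{pmatrix}$, which is symmetric, symplectic and positive definite for all $t$, yields the contraction $J_t=J_{2n}P_t$ from $J$ to $J_{2n}$, depending continuously on $J$. (Your non-emptiness step is fine, and in this basis it is immediate anyway, since $J_{2n}\in\mc J(\omega,\Lambda,g)$.)
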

\begin{remark}
  Lemma \ref{lem:technicalgeneralcase} says that the set of almost complex structures compatible with a given symplectic form and a fixed metric on some Lagrangian subspace is contractible.
\end{remark}
\begin{proof}
  First, we choose an orthonormal basis $\left\{ e_i \right\}$ for $\Lambda$ and extend it to $\omega$-standard basis $\left\{ e_i,f_i \right\}$.
  So, $\omega_0 = \sum_{i=1}^n e^i\wedge f^i$.
  We think of $J\in \mc J(\omega,\Lambda,g)$ as an $2n\times 2n$ matrix with respect to this basis.
  Note that $J\in \mc J(\omega,\Lambda,g)$ if and only if
  \begin{enumerate}
    \item $J^2=-\id_{2n}$,
    \item $J^\T J_{2n}J=J_{2n}$ where $J_{2n}=\begin{pmatrix} 0 & -\id_n \\ \id_n & 0 \end{pmatrix}$
    \item $-J_{2n}J=\begin{pmatrix} \id_n & B \\ B^\T & C \end{pmatrix}$ is symmetric positive definite.
  \end{enumerate}

  Let $P=-J_{2n}J$.
  Note that
  \begin{align*}
    P^\T J_{2n}P &= -J^\T J_{2n}J_{2n}J_{2n}J \\
    &= J^\T J_{2n} J \\
    &=J_{2n}.
  \end{align*}
  This implies $C= \id + BB^\T$.
  Define the path $P_t =\begin{pmatrix} \id_n & tB \\ tB^\T & I+t^2BB^\T \end{pmatrix}$.
  Clearly, $P_t^\T = P_t$.
  Next, we check if $P_t$ is a symplectic matrix.
  \begin{align*}
    & 
    \begin{pmatrix}
      \id_n & tB^\T \\
      tB & \id_n + t^2 BB^\T
    \end{pmatrix}
    \begin{pmatrix}
      0 & -\id_n \\
      \id_n & 0
    \end{pmatrix}
    \begin{pmatrix}
      \id_n & tB \\
      tB^\T & \id_n + t^2 BB^\T
    \end{pmatrix} \\
    &=  
    \begin{pmatrix}
      \id_n & tB^\T \\
      tB & \id_n + t^2 BB^\T
    \end{pmatrix}
    \begin{pmatrix}
      -tB^\T & -\id_n-t^2BB^\T \\
      \id_n & tB
    \end{pmatrix}
    \\
    &= 
    \begin{pmatrix}
      0 & -\id_n \\
      \id_n & 0
    \end{pmatrix}
  \end{align*}
    Therefore, $P_t$ is invertible for all $t$.
    Since it is always symmetric and at $t=0$ (or $t=1$) it is positive definite, $P_t$ is positive definite for all $t$.
    Hence, $J_{2n}P_t$ is a path in $\mc J(\omega,\Lambda, g)$ from $J_{2n}$ to $J$.
    Clearly, the path depends continuously on $J$.
\end{proof}
\begin{proof}[Proof of Lemma \ref{lem:technical}]
  The first two properties imply that $\mc J(\omega,\Lambda,\varphi)=\mc J(\omega,\Lambda,g)$ where $g$ is the metric induced from $\varphi$ on $\Lambda$.
  Thus, Lemma \ref{lem:technicalgeneralcase} shows that it is contractible.
  The third property is trivially satisfied by definition of complex linear extension.
\end{proof}

\section{Grassmannians}
In this section, we consider various Grassmannians related to our discussions.

\subsection{Associative Grassmannian}
In this section we focus on a discussion of the associative Grassmannian over $\cc$.
The reader can consult to \cite{AC15} for a more comprehensive description of this variety or to \cite{HL82,AK16,AS08} for more details on the associative Grassmannian over $\rr$ (the Cayley version is discussed in \cite{Yil17}).

Using octonionic multiplication one can define {\it an associator bracket} as follows
\begin{equation}
  [u,v,w]=\frac{1}{2}\left( u(vw)-(uv)w \right).
\end{equation}
Clearly, this bracket measures whether given three octonions satisfy associativity or not.
A three-dimensional subspace of $\imo$ on which the associator bracket vanishes is called {\it associative}.
The space of all associative planes is called the associative Grassmannian and it is denoted by Gr($\varphi$).

\begin{remark}
  Our definition of associatives differs from that of \cite{AC15}.
  In fact, we will often require $B$ to be non-degenerate on an associative plane $L$.
  This is the convention of \cite{AC15} and only with this convention, it is possible to find a $B$-orthonormal basis of $L$ on which $\varphi$ evaluates to $\pm1$.
  We explicitly state so whenever we require this condition.
\end{remark}

We denote by Gr$^\cc(k,n)$ the complex Grassmannian of $k$-planes in $n$-dimensional space and by Gr$^\rr(k,n)$ the real Grassmannian of $k$-planes in $n$-dimensional space.
Clearly, after choosing identifications $\oo\cong\cc^8\cong\rr^{16}$, we have Gr$(\varphi)\subset$ Gr$^\cc(3,7)\subset$ Gr$^\rr(6,14)$.

It turns out that the associator bracket is the imaginary part of a triple cross product defined as follows
\begin{equation}
  u\times v\times w =  (u\conj v)w-(w\conj v)u
\end{equation}
for all $u,v,w\in\oo$.
More precisely, $\im{u\times v\times w} = [u,v,w]$ for $u,v,w\in\imo$.

\begin{proposition}
  For $u,v,w\in\imo$,
  \begin{equation*}
    [u,v,w] = u\times(v\times w) + B(u,v)w-B(u,w)v.
  \end{equation*}
  \label{prop:bracketalt}
\end{proposition}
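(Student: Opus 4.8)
The plan is to reduce the identity to a purely octonionic computation using the relations $u \times v = \conj v u - B(u,v)$ and $[u,v,w] = \im{u \times v \times w}$, where $u \times v \times w = (u\conj v)w - (w\conj v)u$, all of which were established earlier in the excerpt. First I would expand the right-hand side. Using $v \times w = \conj w v - B(v,w)$ and then $u \times (v\times w) = \conj{(v\times w)} u - B(u, v\times w)$, together with $\conj{(\conj w v - B(v,w))} = \conj v w - B(v,w)$ (since $\conj{ab} = \conj b\, \conj a$ and $B(v,w)$ is a scalar fixed by conjugation), I get
\begin{equation*}
  u\times(v\times w) = (\conj v w) u - B(v,w) u - B(u, \conj w v - B(v,w)).
\end{equation*}
Here $B(u, \conj w v) = B(u, w\times v + B(v,w)) = \varphi_0(w,v,u) + B(v,w)B(u,1) = \varphi_0(w,v,u)$ since $u \perp 1$; but in fact the scalar terms will cancel against the $B(u,v)w - B(u,w)v$ correction, so I would track them carefully rather than discard them prematurely.

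The key step is to show $(\conj v w)u = (u\conj v)w - B(u,w)v + (\text{scalar})\cdot(\text{something})$, i.e. to relate the left-multiplication grouping $(\conj v w)u$ appearing from the cross-product expansion to the grouping $(u\conj v)w$ appearing in the triple cross product. This is where the non-associativity genuinely enters, and it is the main obstacle: one cannot freely reassociate, so I would instead use the Moufang identities (or the linearized flexible/alternative laws) valid in any octonion algebra — for instance $(xy)(zx) = x(yz)x$ and the linearizations of $x(xy) = x^2 y$ — to convert between the two bracketings at the cost of controlled correction terms. Concretely, writing everything in terms of $\conj v = -v$, $\conj w = -w$ on $\imo$ and using $xy = -\conj y\, \conj x = -(-y)(-x)$ type relations to move conjugations around, the associator $[u,v,w] = \tfrac12(u(vw) - (uv)w)$ can be rewritten and matched term by term. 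An alternative, possibly cleaner route: expand both $[u,v,w]$ and $u\times(v\times w)$ directly into the associator-bracket form $\tfrac12(a(bc) - (ab)c)$ and verify the identity as an identity of associators, invoking the fact (standard for alternative algebras) that the associator $[a,b,c]$ is alternating in its three arguments — this antisymmetry is exactly what produces the $B(u,v)w - B(u,w)v$ terms when one symmetrizes appropriately.

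Once the octonionic bookkeeping is done, I would take imaginary parts of both sides: $\im{[u,v,w]} = [u,v,w]$ since the associator of imaginary octonions is imaginary (as noted before Proposition~\ref{prop:bracketalt} via $\im{u\times v\times w} = [u,v,w]$), while on the right $u\times(v\times w)$ is already imaginary and $B(u,v)w - B(u,w)v \in \imo$, so no terms are lost. The only real subtlety is checking that all the scalar ($\reo$-valued) terms generated in intermediate steps cancel in pairs before taking imaginary parts; since the final identity has no scalar ambiguity, this cancellation is forced, and I would present it as the verification that closes the argument. I expect the reassociation step — legitimizing each move between $(xy)z$ and $x(yz)$ via Moufang/alternativity — to consume most of the proof, with everything downstream being linear-algebraic cleanup.
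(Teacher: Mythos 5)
Your plan is workable and would yield a correct proof, but it takes a genuinely different route from the paper. The paper never expands the octonion products for general arguments: it notes that both sides are alternating trilinear maps (the associator because $\oo$ is alternative, and the right-hand side via the one direct computation $u\times(u\times v)=-B(u,u)v+B(u,v)u$), that both sides are $\cg$-equivariant, and that $\cg$ acts transitively on the relevant orthonormal triples, so the identity only has to be checked on the model triples $(i,j,k)$ and $(i,j,l)$ using the multiplication table. Your route proves the identity for arbitrary $u,v,w\in\imo$ by direct manipulation, and it does close, in fact with less machinery than you anticipate: no Moufang identities are needed. Writing $u\times(v\times w)=\im{(wv)u}+B(v,w)u$ and $\im{(wv)u}=\frac{1}{2}\left((wv)u+u(vw)\right)$ (conjugation reverses products, and $\conj x=-x$ on $\imo$), and using $xy+yx=-2B(x,y)$ for $x,y\in\imo$, the proposition reduces to $(vw)u-(uv)w=2B(u,v)w-2B(u,w)v$; since the associator $(xy)z-x(yz)$ is alternating, the left side equals $v(wu)-u(vw)$, and one more application of $wu=-uw-2B(u,w)$ reduces everything to the linearized left-alternative law $u(vw)+v(uw)=(uv+vu)w$. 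This reassociation step is the entire content of the proof on your route, and your proposal only gestures at it rather than carrying it out; also, your remark that the scalar terms cancel against $B(u,v)w-B(u,w)v$ is slightly off, since those terms are $\imo$-valued while the $\reo$-valued scalars cancel against the real parts of the products --- but neither point is fatal. In comparison, the paper's symmetry argument trades all bookkeeping for a finite multiplication-table check at the cost of invoking transitivity of $\cg$ and a separate verification that the right-hand side is alternating, whereas your computation is self-contained, avoids any appeal to transitivity, and makes explicit that alternativity (through its linearization) is exactly what produces the correction terms.
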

\vspace{-0.2in}
\begin{proof}
  Since octonions are alternative (i.e. any subalgebra generated by two elements is associative), we immediately see that the associator bracket is alternating.
  We denote the right hand side by $R(u,v,w)$.
  Clearly, $R(u,v,v)=0$.

  Next, we show $R(u,u,v)=0$.
  \begin{eqnarray*}
    u\times(u\times v) &=& \im{ \conj{\im {\conj vu} }u} \\
    &=& \im{\im{vu}u} \\
    &=& \im{(vu-\re{vu})u} \\
    &=& (vu-\re{vu})u - \re{(vu-\re{vu})u} \\
    &=& vu^2-\re{vu}u - \re{vu^2} \\
    &=& \im{vu^2} - \re{vu}u \\
    &=& -\im{vB(u,u)} - B(\conj v,u)u \qquad \mt{since $u^2=-B(u,u)$} \\
    &=& -B(u,u)v + B(u,v)u.
  \end{eqnarray*}

Since both sides of the equality are alternating, it is enough to check the equality on orthonormal triples.
Note that both sides of the equation are $\cg$-equivariant, that is $[gu,gv,gw]=g[u,v,w]$ and $R(gu,gv,gw)=gR(u,v,w)$ for $g\in\cg$.
Furthermore, $\cg$ acts transitively on $\cg$-triples, so we actually only need to consider two types of basis vectors of the form $(i,j,x)$ for $x = k,l$.
This can now be easily verified by using the definitions and octonionic multiplication table.

\end{proof}

Recall that $L\in$ Gr$^\ff(k,n)$ has a neighborhood which can be identified with \linebreak $\Hom{\ff}{L}{\ff^n/L}$ which also gives us the following identification 
\begin{equation*}
T_{L}\mt{Gr}^\ff(k,n) \cong \Hom{\ff}{L}{\ff^n/L} \cong L\dual\otimes \ff^n/L
\end{equation*}
for $\ff=\rr, \mt{ or }\cc$.
Let $B$ be a non-degenerate symmetric bilinear form on $\ff^n$ and $L$ be a subspace of $\ff^n$.
With a little abuse of terminology, we say that $L$ is non-degenerate if $B\big|_{L\times L}$ is non-degenerate.
If $L$ is non-degenerate, then we may identify $\ff^n/L$ with $L^\perp=\left\{ v\in\ff^n\; | \; B(u,v)=0 \;\mt{ for all } u\in L \right\}$.
Thus, $T_L\mt{Gr}^\ff(k,n)\cong \Hom{\ff}{L}{L^\perp}$.

Let $\ee$ denote the tautological vector bundle over Gr$^\ff(k,n)$, i.e. the fiber $\ee_L$ over $L\in\mt{Gr}^\ff(k,n)$ is $L$ itself.
Let $\vv=\ee^\perp$.
Over non-degenerate $L$, $\ff^n=L\oplus L^\perp$.
Therefore, over the open dense subset $N\subset\mt{Gr}^\ff(k,n)$ of non-degenerate $k$-planes we have the following isomorphism
\begin{equation*}
T\mt{Gr}^\ff(k,n)\big|_N \cong \left( \ee\dual\otimes \vv \right)\big|_N.
\end{equation*}

Next, we would like to prove the analogue of Lemma 5 in \cite{AS08}.
\begin{lemma}
Let $L\in\mt{Gr}(\varphi)$ be non-degenerate and $L=\langle e_1,e_2,e_3 = e_1\times e_2\rangle$ be an orthonormal basis for $L$. Then
\begin{equation*}
T_L\mt{Gr}(\varphi)=\left\{ \sum_{j=1}^3 e^{j}\otimes v_j \in \ee\dual \otimes \vv \;\;\Big|\; \sum e_j\times v_j =0 \right\}.
\end{equation*}
\label{lem:tangentofassociate}
\end{lemma}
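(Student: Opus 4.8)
The plan is to exploit the identification $T_L\mathrm{Gr}^\cc(3,7)\cong\Hom{\cc}{L}{L^\perp}$ valid near a non-degenerate $L$, and to linearize the associativity condition at $L$. The key input is Proposition~\ref{prop:bracketalt}, which rewrites the associator bracket $[u,v,w]$ in terms of the binary cross product and $B$; this is the exact analogue of the formula used in \cite{AS08} over $\rr$, so the real computation is essentially the same, just carried out $\cc$-linearly. First I would fix the orthonormal basis $e_1,e_2,e_3=e_1\times e_2$ of $L$ and a tangent vector $\sum_j e^j\otimes v_j$ with $v_j\in\vv_L=L^\perp$, and write the one-parameter family $L_t=\langle e_1+tv_1,\;e_2+tv_2,\;e_3+tv_3\rangle$ (up to a higher-order correction that doesn't affect the first-order term). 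The plane $L_t$ lies in $\mathrm{Gr}(\varphi)$ iff the associator bracket vanishes on all pairs of its spanning vectors, equivalently (by trilinearity and the alternating property established in the proof of Proposition~\ref{prop:bracketalt}) iff $[e_i+tv_i,\;e_j+tv_j,\;e_k+tv_k]=0$ for the single triple $(i,j,k)=(1,2,3)$.

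Next I would expand this associator in powers of $t$. The $t^0$ term is $[e_1,e_2,e_3]$, which vanishes since $L$ is associative. The $t^1$ term is
\[
[v_1,e_2,e_3]+[e_1,v_2,e_3]+[e_1,e_2,v_3],
\]
and the requirement for $\sum e^j\otimes v_j$ to be tangent to $\mathrm{Gr}(\varphi)$ is precisely that this sum vanishes. Using Proposition~\ref{prop:bracketalt}, each term $[v_a,e_b,e_c]$ expands as $v_a\times(e_b\times e_c)+B(v_a,e_b)e_c-B(v_a,e_c)e_b$; since $v_a\in L^\perp$ and $e_b,e_c\in L$, the two $B$-terms all vanish, leaving just $v_a\times(e_b\times e_c)$. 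Now on the associative plane $L$ with orthonormal basis $e_1,e_2,e_3=e_1\times e_2$ one has $e_2\times e_3=e_1$, $e_3\times e_1=e_2$, $e_1\times e_2=e_3$ (this follows from $\varphi_0$ evaluating to $\pm1$ on an orthonormal associative triple and skew-symmetry of $\times$), so the linearized condition collapses to
\[
v_1\times e_1 + v_2\times e_2 + v_3\times e_3 = 0,
\]
i.e.\ $\sum_j e_j\times v_j=0$ after using skew-symmetry, which is exactly the asserted defining equation.

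The remaining point is dimensional bookkeeping: I must check that the linear subspace cut out by $\sum e_j\times v_j=0$ inside $\ee_L^\dual\otimes\vv_L\cong\cc^{3}\otimes\cc^{4}$ has the correct dimension, namely $\dim_\cc\mathrm{Gr}(\varphi)$, and that the linearization is surjective onto the relevant obstruction space so that no higher-order obstructions appear — equivalently, that $\mathrm{Gr}(\varphi)$ is smooth at $L$ of the expected dimension and the tangent space is genuinely the kernel rather than merely contained in it. The cross-product map $v\mapsto e_j\times v$ sends $\vv_L=L^\perp$ (4-dimensional) into $\oo$; counting shows the map $(v_1,v_2,v_3)\mapsto\sum e_j\times v_j$ has image of dimension $7-\dim\mathrm{Gr}(\varphi)$, and this is the step I expect to be the main obstacle: one needs the algebraic fact (ultimately a $G_2^\cc$-representation-theoretic statement, reducible by $\cg$-equivariance and transitivity on associative frames to a single explicit check with the octonion multiplication table, exactly as in the last paragraph of the proof of Proposition~\ref{prop:bracketalt}) that this map has constant rank and the kernel has the right size. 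Granting that, the kernel description is forced and the lemma follows.
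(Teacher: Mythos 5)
Your proposal follows essentially the same route as the paper: differentiate the associativity condition along a family of planes through $L$, expand the first-order term via Proposition~\ref{prop:bracketalt}, observe that the $B$-terms vanish because the variations lie in $L^\perp$, and use $e_j\times e_k=e_i$ on the associative frame to arrive at $\sum e_j\times v_j=0$. The dimension count and surjectivity issue you flag as the remaining obstacle is not treated in the paper either — its proof (adapted from \cite{AS08}, in the holomorphic category) likewise only derives the linearized equation along paths in $\mathrm{Gr}(\varphi)$ — so your argument matches the published one.
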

\begin{proof}
The proof is virtually the same as in \cite{AS08} except for the fact that we are now in the holomorphic category.
So, the paths we consider will be holomorphic maps from the unit disk.
Nevertheless, we reproduce the proof here for the sake of completeness.

We first identify $\cc^7/L$ with $L^\perp$ using orthogonal projection.
Let $\gamma$ be a path in $\Hom{\cc}{L}{L^\perp} \; \cap $ Gr$(\varphi)\subset\mt{Gr}^\cc(3,7)$ with $\gamma(0)=L$.
Set $e_i(t)=e_i+\gamma(t)e_i$ for $i=1,2,3$.
Since $\gamma(t)\in \mt{Gr}(\varphi)$, we have $\left[ e_1(t),e_2(t),e_3(t) \right]=0$ for all $t$.
Taking derivative of both sides and evaluating at $t=0$, we get
\begin{align*}
0=& \left[ \dot e_1(0),e_2(0),e_3(0) \right] + \left[ e_1(0),\dot e_2(0),e_3(0) \right] + \left[ e_1(0),e_2(0),\dot e_3(0) \right] \\
=& \left[ \dot e_1(0),e_2,e_3 \right] + \left[ e_1,\dot e_2(0),e_3 \right] + \left[ e_1,e_2,\dot e_3(0) \right].
\end{align*}
Clearly, $\dot e_i(0)\in L^\perp$.
Therefore, by Proposition \ref{prop:bracketalt}, $\left[ \dot e_i(0),e_j,e_k \right]=\dot e_i(0)\times(e_j\times e_k)$. 
Further, $\dot e_i(0)\times(e_j\times e_k) = \dot e_i(0)\times e_i$ for a cyclic permutation $(i,j,k)$ of $(1,2,3)$. 
Thus, we have
\begin{equation*}
\sum_{j=1}^3 e_j\times \dot e_j(0).
\end{equation*}
\end{proof}

\subsection{Isotropic Grassmannian}
In this section, first we recall some basic definitions and facts of symplectic topology (see \cite{MS17}).
Then, we define and investigate the Grassmannian of isotropic planes.

Let $\omega=\sum_i e^i\wedge f^i$ be the standard symplectic form on $\rr^{2n}=\rr^{n}\oplus i\rr^{n}$
where $\left\{ e_i \right\}$ is the standard basis of $\rr^n$, $f_i=\sqrt{-1}e_i$ and $\left\{ e^i,f^i \right\}$ is the dual basis of $\left\{ e_i,f_i \right\}$.
For a subspace $L$, we define its symplectic complement $L^\omega$ to be 
\begin{equation*}
  L^\omega =\left\{ v\in \rr^n\oplus i\rr^n \; | \; w(u,v)=0 \text{ for all } u\in L \right\}.
\end{equation*}

\begin{definition}
  A $k$-plane $L$ is called
  \begin{itemize}
    \item isotropic if $L \subset L^\omega$,
    \item coisotropic if $L^\omega \subset L$, 
    \item symplectic if $L\cap L^\omega = \left\{ 0 \right\}$,
    \item Lagrangian if $L=L^\omega$.
  \end{itemize}
  \label{defn:sympspaces}
\end{definition}

Next, we give a description of the set $I_k$ of isotropic $k$-planes for $k\le n$.
Since the symplectic complement of an isotropic plane is coisotropic and that of a coisotropic plane is isotropic, the set of coisotropic $(n-k)$-planes will be isomorphic to $I_k$.
Clearly, the set of all Lagrangian subspaces is $I_n$.

Given an isotropic $k$-plane $L$, we choose an orthonormal basis $\left\{ z_j \right\}=\left\{ x_j+iy_j \right\}$ ($1\le j \le k$) of $L$.
We express these vectors in the standard basis $\left\{ e_i,f_i \right\}$ of $\rr^n\oplus i\rr^n$ i.e.  $x_i = x_i^je_j$ and $y_i = y_i^jf_j$.
Then, we form the following $2n\times k$ matrix
\begin{equation*}
  \begin{pmatrix}
    X\\Y
  \end{pmatrix}
  =
  \begin{pmatrix}
    x_i^j\\y_i^j
  \end{pmatrix}.
\end{equation*}
The fact that $\left\{ z_j \right\}$ is an orthonormal basis is equivalent to 
\begin{equation}
  \label{eq:unitframeeq1}
  X^\T X+Y^\T Y = I_k.
\end{equation} 

Note that in the above basis, the matrix representation of $\omega$ is
\begin{equation*}
  \begin{pmatrix}
    0 & I_n \\
    -I_n & 0
  \end{pmatrix}.
\end{equation*}
Thus, the fact that $L$ is isotropic is equivalent to
\begin{equation}
  \label{eq:unitframeeq2}
  X^\T Y = Y^\T X.
\end{equation}
Combining these two equations, we see that $\left\{ z_j \right\}$ is a unitary basis (with respect to the standard hermitian structure on $\rr^n\oplus i\rr^n=\cc^n$) of $L$.
So, the projection map from unitary matrices to the span of their first $k$-columns is a surjective map onto $I_k$.
This also shows that $U(n)$ acts transitively on all isotropic $k$-planes.
We set the ``standard'' isotropic plane $L_0$ to be the span of $\left\{ e_1,\dots,e_k \right\}$.
Then, it is easy to see that its stabilizer consists of matrices of the form 
\begin{equation*}
  \begin{pmatrix}
    A & 0 \\
    0 & B
  \end{pmatrix}
  \in U(n)
\end{equation*}
for $A\in O(k)$ and $B\in U(n-k)$.
Hence,
\begin{equation}
  I_k = U(n)\big/ (O(k)\times U(n-k)).
  \label{eq:isotropicgrassmannian}
\end{equation}
In particular, the set of Lagrangian subspaces is isomorphic to
\begin{equation*}
  I_n = U(n)/O(n).
\end{equation*}

Next, we prove the analogue of Lemma \ref{lem:tangentofassociate} for $I_k\subset Gr^\rr(k,2n)$.
Before we state the lemma, recall that any orthonormal basis of an isotropic subspace $L$ can be extended to a orthonormal $\omega$-standard basis for $\rr^{2n}$, see \cite{MS17}.
Set $\ee$ to be the tautological bundle over $Gr^\rr(k,2n)$ and $\vv=\ee^\perp$.
\begin{lemma}
  Let $L\in I_k$ and $L=\langle e_1,\dots,e_k\rangle$ be an orthonormal basis for $L$. 
  Extend this basis to an orthonormal $\omega$-standard basis $\left\{ e_i,f_i \right\}$.
  Then
\begin{equation*}
  T_L\mt I_k=\left\{ \sum_{j=1}^k e^{j}\otimes v_j \in \ee\dual \otimes \vv \;\;\Big|\; \omega(e_i,v_j)=\omega(e_j,v_i) \right\}.
\end{equation*}
\label{lem:tangentofisotropic}
\end{lemma}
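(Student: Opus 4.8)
The plan is to mimic the proof of Lemma \ref{lem:tangentofassociate}, replacing the associator constraint by the isotropy constraint. Recall that a neighborhood of $L$ in $\mt{Gr}^\rr(k,2n)$ is identified with $\Hom\rr L {L^\perp}$ via orthogonal projection, so that $T_L\mt{Gr}^\rr(k,2n)\cong \ee\dual\otimes\vv$ at $L$. A tangent vector to $I_k$ at $L$ is then $\dot\gamma(0)$ for a smooth path $\gamma(t)$ in $\Hom\rr L{L^\perp}\cap I_k$ with $\gamma(0)=L$. Writing $e_i(t)=e_i+\gamma(t)e_i$ for $1\le i\le k$, the condition $\gamma(t)\in I_k$ is precisely $\omega(e_i(t),e_j(t))=0$ for all $i,j$ and all $t$ (since $\left\{ e_i(t) \right\}$ spans $\gamma(t)$, and isotropy is equivalent to the vanishing of $\omega$ on a spanning set).

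The key step is to differentiate this identity at $t=0$. Using bilinearity of $\omega$ and $\omega(e_i,e_j)=0$ (the $e_i$ span the isotropic plane $L$), we get
\begin{equation*}
  0=\frac{d}{dt}\Big|_{t=0}\omega(e_i(t),e_j(t)) = \omega(\dot e_i(0),e_j) + \omega(e_i,\dot e_j(0)).
\end{equation*}
Setting $v_i=\dot e_i(0)\in L^\perp=\vv_L$, this reads $\omega(v_i,e_j)=-\omega(e_i,v_j)$, i.e. $\omega(e_i,v_j)=\omega(e_j,v_i)$ after using skew-symmetry of $\omega$ on the left term. Hence every tangent vector $\sum_j e^j\otimes v_j$ to $I_k$ satisfies the stated symmetry condition, proving the inclusion $\subseteq$.

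For the reverse inclusion I would check dimensions, or else exhibit explicit paths. The dimension count is clean: from \eqref{eq:isotropicgrassmannian}, $\dim I_k = \dim\U(n) - \dim\mt O(k) - \dim\U(n-k) = n^2 - \binom{k}{2} - (n-k)^2 = k(2n-k) - \binom{k}{2}$; meanwhile $\ee\dual\otimes\vv$ has dimension $k(2n-k)$, and the symmetry constraints $\omega(e_i,v_j)=\omega(e_j,v_i)$ for $i<j$ cut out $\binom{k}{2}$ conditions, which one must verify are independent. Since the subset defined by the symmetry condition contains the tangent space and has the same dimension as $I_k$ once independence is checked, the two coincide. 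The main obstacle is the linear-independence verification of the $\binom{k}{2}$ constraints: one resolves it by extending $\left\{ e_i \right\}$ to an $\omega$-standard basis $\left\{ e_i,f_i \right\}$ of $\rr^{2n}$ (as recalled just before the statement), so that $\vv_L$ is spanned by $f_1,\dots,f_k$ together with the remaining $e_j,f_j$ with $j>k$; expanding each $v_i$ in this basis, the constraint $\omega(e_i,v_j)=\omega(e_j,v_i)$ becomes a symmetry condition on the $f_1,\dots,f_k$-components of the $v_i$ only, which are manifestly $\binom{k}{2}$ independent linear equations. Alternatively, and perhaps more in keeping with the style of Lemma \ref{lem:tangentofassociate}, one notes that since $\U(n)$ acts transitively on $I_k$, the tangent space is the image of $\mf u(n)$ under the orbit map, and a direct computation of that image against the model above gives equality; I would include whichever is shorter.
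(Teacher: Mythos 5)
Your differentiation step is exactly the paper's argument: the paper also takes a path $e_i(t)=e_i+\phi_t(e_i)$ with $\omega|_{L_t}\equiv 0$, differentiates $\omega(e_i(t),e_j(t))=0$ at $t=0$, and obtains $\omega(\dot e_i,e_j)+\omega(e_i,\dot e_j)=0$, which is the stated symmetry condition. Where you go beyond the paper is the reverse inclusion: the paper's proof stops after this linearization, so strictly speaking it only establishes ``$\subseteq$'', while you close the gap with a dimension count. That count is correct: by (\ref{eq:isotropicgrassmannian}), $\dim I_k=\dim\U(n)-\dim \mt{O}(k)-\dim\U(n-k)=k(2n-k)-\binom{k}{2}$, the fiber $(\ee\dual\otimes\vv)_L$ has dimension $k(2n-k)$, and expanding each $v_j$ in the $\omega$-standard basis shows the constraints amount to symmetry of the $k\times k$ matrix of $f_1,\dots,f_k$-components, hence $\binom{k}{2}$ manifestly independent equations; since $I_k$ is a homogeneous space (so smooth of the stated dimension), equality follows. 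So the proposal is correct, matches the paper's computation for the main step, and is in fact more complete than the paper's own proof.
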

\begin{proof}
  Let $\phi_t:L\to L^\perp$ be a path of isotropic planes, i.e. $\phi(0)=0$ and
  $\omega|_{L_t}\equiv 0$ for all $t$ where $L_t=\langle e_i(t)\rangle_{i=1}^k$ and $e_i(t) = e_i + \phi_t(e_i)$ for $1\le i \le k$.
  Set $\dot e_i =\frac{d}{dt}|_{t=0}e_i(t)$.
  Since $\omega(e_i(t),e_j(t))=0$ for all $t$ after taking derivatives and plugging $t=0$, we get
  \begin{align*}
    \omega(\dot e_i,e_j) + \omega(e_i,\dot e_j) =  0.
  \end{align*}
\end{proof}

\subsection{Isotropic associative Grassmannian}
\label{ssec:iag}
In this section, we define a new type of Grassmannian called the isotropic associative Grassmannian.
The planes in this Grassmannian sit in a specific intersection of two different geometries, namely $G_2$-geometry and symplectic geometry.
Arbitrary choices of a symplectic form and (complex) non-degenerate three-form (on $\cc^7$) may result in different intersections of (real) associative and isotropic planes.
However, the construction of subsection \ref{ssec:complexg2space} determines a particular way to intersect the two geometries.

First, we fix a particular octonion algebra $\oo$ as in \cite{Yil17}.
Then, $(\im\oo,\varphi_\cc,\Omega_\cc,B)$ is actually the complexification of a real $G_2$ space ($\rr^7,\varphi,\Omega,g$) with a positive definite inner product and the standard symplectic structure $\omega$ is compatible with $\varphi_\cc$.

\begin{definition}
\label{defn:isotropicassocplane}
Let $L$ be a (real) 3-dimensional subspace of $\im \oo = \cc^7$.
We call $L$ isotropic associative if 
\begin{enumerate}
\item $\omega\big|_L\equiv 0$, and
\item $[u,v,w]_\cc=0$ for $u,v,w\in L$.
\end{enumerate}
We denote the space of all isotropic associative planes by $I^\varphi_3\subset Gr^\rr(3,14)$.
\end{definition}

The following lemma describing the tangent space of $I^\varphi_3$ in $Gr^\rr(3,14)$ at a (real) associative plane can be proved by a combination of Lemma \ref{lem:tangentofassociate} and Lemma \ref{lem:tangentofisotropic}.
\begin{lemma}
  Let $L=\langle e_1,e_2,e_3\rangle$ be an associative plane in $\rr^7$.
  Then its natural embedding in $\cc^7=\im{\oo}$ is an isotropic associative.
Denote the (real) tautological bundle over $Gr^\rr(3,14)$ by $\ee$.
Also, set $\vv=\ee^{\perp B}$.
Then $\ee^{\perp g}=J\ee\oplus \vv$ and
\begin{eqnarray}
  T_{L}I^\varphi_3&=&\left\{ \sum_{i=1}^3 e^i\otimes (f_i+v_i) \in \ee\dual\otimes_\rr (J\ee\oplus\vv) \; \right.
  \\ & &  \quad\; | \; \left.\sum e_i\times v_i =0 \text{ and } \omega(e_i,f_j)=\omega(e_j,f_i) \right\}
  \nonumber
\end{eqnarray}
  \label{lem:tangentofia}
\end{lemma}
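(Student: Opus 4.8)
The plan is to reduce this to the two lemmas already proved, namely Lemma \ref{lem:tangentofassociate} for the associative condition and Lemma \ref{lem:tangentofisotropic} for the isotropic condition, and simply track how the ambient splittings change when we pass from $\rr^7$ (resp. $\cc^7$ with the $B$-orthogonal complement) to $\rr^{14}$ with the positive definite metric $g = \re B$. First I would set up notation: since $L = \langle e_1, e_2, e_3\rangle$ is a real associative plane in $\rr^7 \subset \cc^7$, the complexification of the deformation problem sits inside $Gr^\rr(3,14)$, and the tangent space there is $\ee\dual\otimes_\rr(\ee^{\perp g})$. The first step is the linear-algebra identity $\ee^{\perp g} = J\ee \oplus \vv$ where $\vv = \ee^{\perp B}$; this follows because on the real $7$-dimensional $L$ the form $g$ restricts to the original positive definite metric (so $L \cap L^{\perp g}$ is $0$-dimensional inside $\rr^7$), while Remark \ref{rmk:anticompg} gives $g(Ju, Jv) = -g(u,v)$, which together with $g$ being definite on $L$ forces $JL$ to be $g$-orthogonal to $L$; a dimension count ($3 + 3 + 8 = 14$) then pins down $\ee^{\perp g} = J\ee \oplus \vv$.

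Next I would decompose a tangent vector $\sum_i e^i \otimes w_i \in \ee\dual \otimes \ee^{\perp g}$ along this splitting as $w_i = f_i + v_i$ with $f_i \in J\ee$ and $v_i \in \vv$, where I write $f_i = Ju_i$ for convenience but really the $J\ee$ part is a free real $3$-plane's worth of data per index. The key observation is that the two defining conditions of $I^\varphi_3$ \emph{decouple} along this splitting. The isotropic condition $\omega|_{L_t} \equiv 0$, differentiated at $t = 0$ exactly as in the proof of Lemma \ref{lem:tangentofisotropic}, gives $\omega(\dot e_i, e_j) + \omega(e_i, \dot e_j) = 0$; since $\omega = \im h$ vanishes on $\vv$ paired with $\ee$ in the relevant way — more precisely since $\omega$ restricted appropriately only sees the $J\ee$ component — this becomes precisely $\omega(e_i, f_j) = \omega(e_j, f_i)$. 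The associative condition $[e_1(t), e_2(t), e_3(t)]_\cc = 0$, differentiated and analyzed via Proposition \ref{prop:bracketalt} exactly as in Lemma \ref{lem:tangentofassociate}, gives $\sum_i e_i \times \dot e_i(0) = 0$; here one must check that the cross product term only involves the $\vv$-component $v_i$ and not the $J\ee$-component $f_i$, i.e. that $\sum e_i \times f_i = 0$ automatically, so that the condition collapses to $\sum e_i \times v_i = 0$.

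The main obstacle, and the step requiring real care, is exactly that last decoupling claim: showing that the associative-deformation equation $\sum e_i \times \dot e_i = 0$ splits cleanly so that the $J\ee$ part $\sum e_i \times f_i$ drops out (is automatically zero or is absorbed), leaving only the $\vv$-constraint $\sum e_i \times v_i = 0$, while simultaneously the isotropic equation sees only the $J\ee$ part. Concretely, one needs the cross product $\cc$-bilinearly extended to satisfy $e_i \times (Ju) = J(e_i \times u)$ (since $G_2 \subset \cg$ commutes with the complex structure, equivalently the cross product is $\cc$-bilinear), so $\sum_i e_i \times f_i = \sum_i e_i \times Ju_i = J\sum_i e_i \times u_i$, and for the standard orthonormal associative triple $e_1, e_2, e_3 = e_1\times e_2$ one computes directly from the multiplication table that the relevant combination either vanishes or is again constrained — this is where a short explicit octonion computation is unavoidable, but it is the only genuinely new ingredient. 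Once that is in hand, the lemma is just the conjunction of the two already-established tangent space descriptions carried along the splitting $\ee^{\perp g} = J\ee \oplus \vv$, so I would present it as: ``combine the computations in the proofs of Lemmas \ref{lem:tangentofassociate} and \ref{lem:tangentofisotropic}, noting that the two conditions constrain the $\vv$- and $J\ee$-components separately by the displayed splitting.''
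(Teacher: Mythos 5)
Your overall plan (differentiate the two defining conditions and combine Lemmas \ref{lem:tangentofassociate} and \ref{lem:tangentofisotropic} along the splitting $\ee^{\perp g}=J\ee\oplus\vv$) is the right one, and you correctly identify the decoupling as the point needing care --- but the mechanism you propose for it is wrong, and the step you flag as ``the only genuinely new ingredient'' would fail if you carried it out. First, you cannot write the differentiated associativity condition as $\sum_i e_i\times\dot e_i(0)=0$: the passage from $[\dot e_i,e_j,e_k]$ to $\dot e_i\times(e_j\times e_k)$ via Proposition \ref{prop:bracketalt} uses $B(\dot e_i,e_j)=B(\dot e_i,e_k)=0$, which holds for the $\vv$-components $v_i$ but not for the $J\ee$-components (e.g.\ $B(Je_2,e_2)=i\neq 0$). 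Second, the claim that $\sum_i e_i\times f_i=0$ ``automatically'' is false: writing $f_i=Ju_i$ with $u_i=\sum_j u_i^j e_j$, one gets $\sum_i e_i\times f_i=J\bigl((u_2^3-u_3^2)e_1+(u_3^1-u_1^3)e_2+(u_1^2-u_2^1)e_3\bigr)$, which vanishes exactly when $(u_i^j)$ is symmetric --- i.e.\ exactly when the isotropy condition $\omega(e_i,f_j)=\omega(e_j,f_i)$ holds, not identically. (Indeed $f_1=Je_2$, $f_2=-Je_1$, $f_3=0$ is tangent to a genuine family of real $3$-planes inside $L_\cc$ on which the complex associator vanishes, so the associativity condition cannot constrain the $f$-part at all; your version would impose a constraint on $f$ and is only ``rescued'' by the coincidence that the isotropy condition imposes the same symmetry.) The correct decoupling argument --- the one the paper uses in the parallel proof of Lemma \ref{lem:tangentofrealassocplanes} --- is that $f_i\in J\ee\subset L_\cc=L\oplus JL$, and the associator, being $\cc$-trilinear and vanishing on the real associative plane $L$, vanishes on all of $L_\cc$; hence $[f_i,e_j,e_k]=0$ and the $f$-terms drop out of the differentiated equation \emph{before} any conversion to cross products, after which Proposition \ref{prop:bracketalt} applied only to the $v_i\in\ee^{\perp B}$ gives $\sum e_i\times v_i=0$. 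Your treatment of the isotropy condition (that $\omega$ pairs $\ee$ trivially with $\vv$, so only the $J\ee$-part survives) is correct.

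A secondary point: your justification of $\ee^{\perp g}=J\ee\oplus\vv$ is also not a valid deduction. Skew-compatibility $g(Ju,Jv)=-g(u,v)$ together with definiteness of $g|_L$ does \emph{not} force $JL\perp_g L$ in general (already in dimension $2$ one can take $g=\mathrm{diag}(1,-1)$ and $J=\begin{pmatrix}\sinh t & \cosh t\\ -\cosh t & -\sinh t\end{pmatrix}$, which is skew-compatible with $J^2=-\id$, yet $g(Je_1,e_1)=\sinh t\neq0$). What you actually need is that $L$ is \emph{real}: $B$ restricted to $\rr^7\times\rr^7$ is real-valued, so $g(Ju,v)=\re{B(Ju,v)}=-\im B(u,v)=0$ for $u,v\in L\subset\rr^7$; this is the same vanishing of $\omega$ (equivalently $\im B$) on $L$ that the paper invokes in the proof of Lemma \ref{lem:tangentofrealassocplanes}. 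With these two repairs your outline does reduce the lemma to the computations of Lemmas \ref{lem:tangentofassociate} and \ref{lem:tangentofisotropic}, which is exactly how the paper disposes of it.
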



\subsection{\texorpdfstring{$B$}{B}-Real associative Grassmannian}
It is possible to define a different notion of a ``real'' part of complex associative planes.
We call them $B$-real associative planes.

\begin{definition}
\label{defn:realassocplane}
Let $L$ be a (real) 3-dimensional subspace of $\im \oo$.
Set $g=\re{B}$ and $w=-\im{B}$.
We call $L$ $B$-real associative if 
\begin{enumerate}
\item $g\big|_L$ is positive definite (in particular non-degenerate),
\label{racond1}
\item $w\big|_L\equiv 0$, and
\label{racond2}
\item $[u,v,w]_\cc=0$ for $u,v,w\in L$.
\label{racond3}
\end{enumerate}
We denote the space of all $B$-real associative planes by $RA\subset Gr^\rr(3,14)$.
\end{definition}

\begin{remark}
  In Definition \ref{defn:realassocplane}, (\ref{racond1}) is a technical condition which simplifies the statements and discussions later on.
\end{remark}

\begin{remark}
\label{rmk:gorthborth}
If $L$ is a real associative plane, $g(u,v)=B(u,v)$ for $u,v\in L$.
Hence, a $g$ orthonormal frame is also a $B$ orthonormal frame.
Therefore, the natural complexification $L_\cc = L\oplus JL$ of $L$ is in $Gr(\varphi_\cc)$.
Under this map, we obtain the following disk bundle
\begin{equation*}
D^3\to RA \to Gr(\varphi_\cc).
\end{equation*}
\end{remark}

Next, we determine the tangent space of $RA$.
\begin{lemma}
Let $L=\langle e_1,e_2,e_3\rangle$ be a $B$-real associative plane in $\im\oo$.
By $\ee$, we denote the (real) tautological bundle over $Gr^\rr(3,14)$.
Set $\vv=\ee^{\perp B}$.
Then $\ee\oplus\ee^{\perp g}$ is a splitting of the trivial $\rr^{14}$ bundle over $RA$, $\ee^{\perp g}$ decomposes as $J\ee\oplus \vv$ and
\begin{equation}
T_{L}RA(\im\oo) = \left\{ \sum_{i=1}^3 e^i\otimes (f_i+v_i) \in \ee\dual\otimes_\rr (J\ee\oplus\vv) \; | \; \sum e_i\times v_i =0 \text{ and } (f_{ij})\in\mathfrak{so}(3) \right\}
\end{equation}
where by $f_{ij}$ we mean the $j^{th}$ component of $f_i$ with respect to the basis $\left\{ Je_1,Je_2, Je_3\right\}$.
\label{lem:tangentofrealassocplanes}
\end{lemma}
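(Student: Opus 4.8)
The plan is to run the same ``take the derivative along a holomorphic (resp.\ smooth) path of planes'' argument that was used for Lemma~\ref{lem:tangentofassociate}, Lemma~\ref{lem:tangentofisotropic} and Lemma~\ref{lem:tangentofia}, but now keeping track simultaneously of the associativity constraint, the $w=-\im B$ vanishing constraint, and the extra positive-definiteness constraint~(\ref{racond1}) in Definition~\ref{defn:realassocplane}. First I would set up the splitting: over $RA$ the plane $L$ is $g$-nondegenerate (in fact $g|_L$ positive definite) by~(\ref{racond1}), so $\rr^{14}=L\oplus L^{\perp g}$; and by Remark~\ref{rmk:gorthborth} a $g$-orthonormal frame of a $B$-real associative $L$ is also $B$-orthonormal, so $L$ is $B$-nondegenerate, $\vv:=\ee^{\perp B}$ is a genuine complement, and $L^{\perp g}=J\ee\oplus\vv$ (using $g(Ju,Jv)=-g(u,v)$ from Remark~\ref{rmk:anticompg}, so $JL\perp_g L$, together with $\vv\perp_g L$, which follows because $g=\re B$). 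This gives the ambient identification $T_L Gr^\rr(3,14)\cong \ee\dual\otimes_\rr(J\ee\oplus\vv)$, so a tangent vector is written $\sum_i e^i\otimes(f_i+v_i)$ with $f_i\in J\ee$ and $v_i\in\vv$.

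Next I would differentiate the three defining conditions. For a smooth path $L_t=\langle e_i(t)\rangle$ with $e_i(t)=e_i+\phi_t(e_i)$, $\phi_t(e_i)\in L^{\perp g}$, write $\dot e_i=f_i+v_i$ with $f_i\in J\ee$, $v_i\in\vv$. Condition~(\ref{racond3}), associativity $[e_i(t),e_j(t),e_k(t)]_\cc=0$, differentiates exactly as in Lemma~\ref{lem:tangentofassociate}: using Proposition~\ref{prop:bracketalt} and the fact that $\dot e_i\in L^{\perp g}$ (so the $B(u,v)w$ terms behave correctly when contracted against the $B$-orthonormal frame — here one must note that the $J\ee$ part is $B$-orthogonal to $L$ as well, since $B(Ju,v)=-iB(u,Jv)$... actually $B(Ju,v)=iB(u,v)$ forces $B(Ju,u)=iB(u,u)\ne0$ in general, so more care is needed: the cleanest route is to decompose $\dot e_i = f_i+v_i$ and observe that only the octonionic cross product $e_i\times(e_j\times e_k)$ survives modulo $L$, the correction terms $B(\cdot,\cdot)\cdot$ landing in $L$ and thus not constraining the normal component) — yielding $\sum_i e_i\times \dot e_i =0$ in $\cc^7/L$. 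Since $e_i\times f_i\in \ee\oplus J\ee$ while $e_i\times v_i$ is the honest normal piece, I would argue (as in the associative case, where $e_i\times e_j = e_k$) that the $J\ee$ contributions $\sum e_i\times f_i$ must separately vanish or be absorbed, leaving the clean constraint $\sum_i e_i\times v_i=0$. Condition~(\ref{racond2}), $w|_{L_t}\equiv0$ with $w=-\im B=\omega$, differentiates exactly as in Lemma~\ref{lem:tangentofisotropic}: $\omega(\dot e_i,e_j)+\omega(e_i,\dot e_j)=0$, i.e.\ $\omega(e_i,\dot e_j)=\omega(e_j,\dot e_i)$; writing $\dot e_i=f_i+v_i$ and using that $\omega$ restricted appropriately reduces this to a condition on the $f_i\in J\ee\cong L$ (via $J$), namely that the matrix $(f_{ij})$ of $f_i$ in the basis $\{Je_1,Je_2,Je_3\}$ satisfies $g(e_i,J f_j)$-type symmetry — and since $\omega(e_i,Je_j)=g(e_i,e_j)=\delta_{ij}$ by skew-compatibility, the isotropy condition becomes $f_{ij}=-f_{ji}$, i.e.\ $(f_{ij})\in\mf{so}(3)$. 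Finally, condition~(\ref{racond1}) is open, hence imposes no linear constraint on the tangent space (it only guarantees $L^{\perp g}$ is a complement, which we already used).

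The main obstacle I anticipate is the bookkeeping in the associativity derivative: unlike the purely associative or purely isotropic cases, here the normal space $L^{\perp g}=J\ee\oplus\vv$ has two pieces, and one must verify carefully that the octonionic triple-product terms involving the $J\ee=JL$ directions do not produce extra constraints and do not contaminate the $\sum e_i\times v_i=0$ relation — in particular that $e_i\times(Je_j)$ and the associator corrections all lie in $\ee\oplus J\ee$ modulo the span of $L$, so that projecting to $\vv$ isolates precisely $\sum e_i\times v_i$. The secondary subtlety is matching conventions so that the isotropy derivative genuinely collapses to $(f_{ij})\in\mf{so}(3)$ rather than some twisted symmetry; this hinges on $\omega(e_i,Je_j)=g(e_i,e_j)$ and on $g|_L$ being the standard form in the chosen orthonormal frame, which is exactly what~(\ref{racond1}) plus Remark~\ref{rmk:gorthborth} deliver. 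Once these two points are pinned down, intersecting the three differentiated conditions gives the claimed description of $T_L RA(\im\oo)$, and a dimension count (comparing with $I^\varphi_3$ from Lemma~\ref{lem:tangentofia}, where the isotropy condition was the weaker $\omega(e_i,f_j)=\omega(e_j,f_i)$ allowing a symmetric rather than skew $(f_{ij})$) serves as a consistency check, consistent with the $D^3$-bundle picture of Remark~\ref{rmk:gorthborth}.
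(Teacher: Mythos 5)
Your overall strategy (differentiate the three defining conditions along a path, using the splitting $L^{\perp g}=JL\oplus L^{\perp B}$, with positive-definiteness contributing nothing because it is open) is the same as the paper's, but the two places you yourself flag as delicate are exactly where your argument has genuine gaps. For the associativity condition, the idea you are missing is the paper's key observation: the associator bracket is $\cc$-trilinear and $L$ is associative, so for $\dot f_i\in JL\subset L_\cc$ one has $[\dot f_i,e_j,e_k]_\cc=i[\,\cdot\,,e_j,e_k]_\cc=0$ identically (equivalently, $L_\cc\in Gr(\varphi_\cc)$ by Remark \ref{rmk:gorthborth}); hence the $J\ee$-components drop out of the differentiated associativity equation \emph{before} any cross products appear, and what remains is handled exactly as in Lemma \ref{lem:tangentofassociate} because $B(\dot v_i,e_j)=0$ for $\dot v_i\in L^{\perp B}$. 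Your proposed workaround --- project modulo $L$ and discard the $B(\cdot,\cdot)\cdot$ correction terms of Proposition \ref{prop:bracketalt} as ``landing in $L$'' --- fails precisely on the $J\ee$-part: for $f\in JL$, $B(f,e_j)$ is purely imaginary, so $B(f,e_j)e_k\in JL$, not $L$. Dropping those terms would leave the spurious constraint $\sum e_i\times f_i=0$ (note $e_i\times Je_j=J(e_i\times e_j)\in JL$), which would kill the skew-symmetric part of $(f_{ij})$ and contradict both the statement and the $D^3$-bundle picture of Remark \ref{rmk:gorthborth}. So ``must separately vanish or be absorbed'' cannot be left open; the exact cancellation comes from $\cc$-linearity of the bracket, not from a mod-$L$ projection.

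The second gap is your identification ``$w=-\im{B}=\omega$''. Condition (\ref{racond2}) of Definition \ref{defn:realassocplane} is the vanishing of the \emph{symmetric} form $w(u,v)=-\im{B(u,v)}=g(Ju,v)$ with $g=\re{B}$, not of the symplectic form $\omega=\mt{Im}\,h$ appearing in Lemma \ref{lem:tangentofisotropic} and Lemma \ref{lem:tangentofia}; these are different bilinear forms, which is exactly why $RA$ and $I_3^\varphi$ are different Grassmannians. If you run the antisymmetric-$\omega$ computation you quote, $\omega(\dot e_i,e_j)+\omega(e_i,\dot e_j)=0$ together with $\omega(e_i,Je_j)=\delta_{ij}$ gives the \emph{symmetric} condition on $(f_{ij})$ --- i.e.\ the conclusion of Lemma \ref{lem:tangentofia} --- so the skew conclusion you then assert does not follow from your own derivation. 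The correct route (the paper's) is to differentiate $g(J\gamma_i(t),\gamma_j(t))=0$, discard the $\dot v$-terms using $B(e_i,\dot v_j)=0$ (both real and imaginary parts), and use self-adjointness of $J$ together with $g(Ju,Jv)=-g(u,v)$ to obtain $g(Je_j,\dot f_i)+g(Je_i,\dot f_j)=0$, i.e.\ $f_{ij}=-f_{ji}$, so $(f_{ij})\in\mathfrak{so}(3)$. The remaining parts of your setup (the splitting $\ee\oplus\ee^{\perp g}$, $\ee^{\perp g}=J\ee\oplus\vv$, and the role of condition (\ref{racond1})) agree with the paper.
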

\begin{proof}
Since $g$ is non-degenerate on $L$, it is clear that $\ee_L\oplus\ee_L^{\perp g}$ spans $\rr^{14}$ for $L\in RA$.
This gives a global splitting $\ee\oplus\ee^{\perp g}$ of the trivial $\rr^{14}$ bundle over $RA$.

Furthermore, it is clear that $\vv=\ee^{\perp B}$ is a subbundle of $\ee^{\perp g}$.
Since $B(u,v)=g(u,v)-ig(Ju,v)$ and $\omega(u,v)=g(Ju,v)$ restricted to $L$ vanishes, $J\ee$ is a subbundle of $\ee^{\perp g}$.
Also, $J\ee\cap\vv = 0$.
So, for dimension reasons, $J\ee\oplus\vv=\ee^{\perp g}$.

Recall that the tangent space $T_LGr^\rr(3,14)=L\dual\otimes L^{\perp g}$.
The above discussion gives us the refinement $L^{\perp g}=JL\oplus L^{\perp B}$.
So, after choosing an orthonormal frame $\left\{ e_1,e_2,e_3 \right\}$ for $L$, we have $X=\left( \sum_{i=1}^3 e^i\otimes(f_i+v_i) \right)\in T_LRA(\im\oo)\subset T_LGr^\rr(3,14)$ where $\left\{ e^1,e^2,e^3 \right\}$ is the basis dual to $\left\{ e_1,e_2,e_3 \right\}$, $f_i\in JL$ and $v_i\in L^{\perp B}$.

A path $\gamma$ in $Gr^\rr(3,14)$ with $\gamma(0)=L$ is (locally) given by paths $f_i$ in $JL$ and $v_i$ in $L^{\perp B}$ with $f_i(0)=0$ and $v_i(0)=0$ for $i=1,2,3$.
Set $\gamma_i(t) = e_i+f_i(t)+v_i(t)$.
So, we have
\begin{equation*}
\gamma(t)=\langle \gamma_i(t) \rangle_{i=1}^3.
\end{equation*}

Assuming $\gamma(t)\in RA$ for all (small) $t$, we check the conditions imposed on $\gamma'(0)$.
Positive-definiteness is an open condition and hence, it does not introduce any condition on $\gamma'(0)$.

Next, by condition \ref{racond2}, $\gamma$ satisfies
\begin{equation*}
\omega(\gamma_i(t),\gamma_j(t))=g(J\gamma_i(t),\gamma_j(t))=0
\end{equation*}
for all $1\le i,j \le 3$.
Applying $\frac{d}{dt}\big|_{t=0}$ to both sides, we get
\begin{align*}
0 &= g(J\dot f_i(0)+J\dot v_i(0),e_j) + g(Je_i,\dot f_j(0)+\dot v_j(0)) \\
&= g(J\dot f_i(0),e_j) + g(Je_i,\dot f_j(0)) \\
&= g(\dot f_i(0),Je_j) + g(Je_i,\dot f_j(0)) \\
&= g(Je_j,\dot f_i(0)) + g(Je_i,\dot f_j(0)).
\end{align*}
Since $\left\{ Je_1,Je_2,Je_3 \right\}$ is an orthonormal basis for $L$, the last equality gives us
\begin{equation*}
\dot f_i^{j}(0)=-\dot f_j^i(0)
\end{equation*}
where $\dot f_i(0)=\sum_{j=1}^3{\dot f}_i^j(0)Je_j$.

Finally, by condition \ref{racond3}, $\gamma$ satisfies
\begin{equation*}
0 = \left[ \gamma_1(t),\gamma_2(t),\gamma_3(t) \right].
\end{equation*}
Applying $\frac{d}{dt}\big|_{t=0}$ to both sides, we get
\begin{equation}
0 = [\dot f_1(0)+\dot v_1(0), e_2, e_3] + [e_1, \dot f_2(0)+\dot v_2(0), e_3] + [e_1, e_2, \dot f_3(0)+\dot v_3(0)].
\label{eq:tobecontinued}
\end{equation}
Note that since $L$ is real associative and $\dot f_i(0)\in L_\cc$, we have
\begin{equation*}
[\dot f_i(0),e_j,e_k]=0
\end{equation*}
for all $i,j,k$.
So, the equation (\ref{eq:tobecontinued}) simplifies to
\begin{align*}
0 &= [\dot v_1(0), e_2, e_3] + [e_1, \dot v_2(0), e_3] + [e_1, e_2, \dot v_3(0)].
\end{align*}
The rest of the proof is as in the proof of Lemma \ref{lem:tangentofassociate} and we get
\begin{equation*}
0=\sum_{i=1}^3 e_i\times \dot v_i(0).
\end{equation*}
\end{proof}

\subsection{Diagram of all Grassmannians}
In this section, we give a diagram of all the relevant Grassmannians showing various maps between them. Whenever we have a map from a real $k$-plane Grassmannian to a complex $k$-plane Grassmannian, the complexification is well defined and we implicitly complexify. Let $F$ be $\rr$ or $\cc$. Let us recall $G_{k}(F^{n})$ is the Grassmannian of $k$ planes in $F^{n}$, and  $G^{\varphi}_{3}(F^{7})$ is the associative grassmannian $3$-planes in $F^{7}$, and $I_{3}(\cc^7)$  is the isotropic Grassmannian of $3$-planes in $\cc^{7}$. Then we have the following inclusions and fibrations (the vertical maps):

\begin{center}
  \begin{tikzpicture}
    \matrix (m) [matrix of math nodes, minimum width=2em, row sep=1.5em, column sep=2em]
    { 
    &                    &                    & G_3(\rr^{14})        & \frac{SO(14)}{SO(3)\times SO(11)} & \\
  &                    & I_3^\varphi(\cc^7) & I_3(\cc^7) & \frac{U(7)}{O(3)\times U(4)}      & \frac{U(3)}{O(3)}\\
      \frac{G_2}{SO(4)}  & G_3^\varphi(\rr^7) & G_3^\varphi(\cc^7) & G_3(\cc^7 )        & \frac{U(7)}{U(3)\times U(4)} & \\
  & G_3(\rr^7)              & \frac{SO(7)}{ SO(3)\times SO(4)}               & G_3^\cc(\rr^7)                   & \frac{SO(7,\cc)}{SO(3,\cc)\times SO(4,\cc)}                                         & \\ };
    \path[-stealth]
    (m-3-2) edge [right hook->] (m-2-3)
    edge [right hook->] (m-3-3) 
    edge [double,-] (m-3-1) 
    edge [right hook->] (m-4-2) 
    (m-4-2) edge [double,-] (m-4-3)
    (m-4-3) edge [right hook->] (m-4-4)
    (m-4-4) edge [double,-] (m-4-5)
    edge [right hook->] (m-3-4)
    (m-3-3) edge [right hook->] (m-3-4)
    (m-2-3) edge [right hook->] (m-2-4)
    edge [right hook->] (m-1-4)
    edge (m-3-3)
    (m-2-4) edge [right hook->] (m-1-4)
    edge [double,-] (m-2-5)
    edge (m-3-4)
    (m-3-4) edge [double,-] (m-3-5)
    (m-1-4) edge [double,-] (m-1-5)
    (m-2-6) edge (m-2-5)
    ;
  \end{tikzpicture}
\end{center}

\section{\texorpdfstring{$\cg$}{Complex G2}-manifolds}
In this section, we fix a particular octonion algebra as in subsection \ref{ssec:iag}.
Let $\oo$ be a copy of $\cc^8$ generated by $e_0=1,e_1,\dots,e_7$ forming an orthonormal basis (with respect to the standard $B$) and $e^0,\dots,e^7$ be its dual basis.
The non-degenerate three-form $\varphi_0$ is given by
\begin{equation}
  \varphi_0 = e^{123}-e^{145}-e^{167}-e^{246}+e^{257}-e^{347}-e^{356}
\end{equation}
in this basis where $e^{ijk}=e^i\wedge e^j\wedge e^k$.
Let $u\in\imo$ be of norm 1.
We define $\Omega_u$ by
\begin{equation}
  6\Omega_u = \iota(u)\varphi_0\wedge\iota(u)\varphi_0\wedge\varphi_0.
\end{equation}
Surprisingly, this definition is independent of $u$.
Indeed, for any $u,v\in\imo$ of norm 1, there is a linear transformation $A\in\cg$ such that $Au=v$.
Thus,
\begin{eqnarray*}
  \Omega_u &=& A\pb (\Omega_u) \\
  &=& A\pb(\iota(u)\varphi_0\wedge\iota(u)\varphi_0\wedge\varphi_0) \\
  &=& \iota(Au)A\pb\varphi_0\wedge\iota(Au)A\pb\varphi_0\wedge A\pb\varphi_0 \\
  &=& \iota(Au)\varphi_0\wedge\iota(Au)\varphi_0\wedge\varphi_0 \\
  &=& \iota(v)\varphi_0\wedge\iota(v)\varphi_0\wedge\varphi_0 \\
  &=& \Omega_v.
\end{eqnarray*}
Let $\Omega_0=\Omega_u$ for any $N(u)=1$.
Evaluating $\Omega_u$ for $u=e_1$, we see that $\Omega_0=e^{1\dots7}$.

For the rest of this section, we identify $(x_1,\dots,x_7,y_1,\dots,y_7)\in\rr^{14}$ with $\Sigma x_je_j+y_jie_j \in\rr\langle e_1,\dots,e_7\rangle\oplus i\rr\langle e_1,\dots,e_7\rangle=\imo$.
This allows us to identify $\cg$ as a subgroup of $\GL(14,\rr)$.
Let $M$ be a 14-manifold and $m\in M$.
An $\rr$-isomorphism $L:\rr^{14}\to T_mM$ is called a frame over $m$ and the frame bundle of $M$ is the collection of all frames as $m$ varies over $M$.
\begin{definition}
  A (real) 14-dimensional manifold $M$ is called an (almost) $\cg$-manifold if its frame bundle admits a reduction to a principal $\cg$-bundle.
\end{definition}
\begin{proposition}
  A $\cg$-manifold $M$ naturally has the following structures
  \begin{itemize}
    \item an almost complex structure $J\in\Gamma(M;\mt{End}(TM))$
    \item a $\cc$-linear three-form $\varphi\in\Omega^3(M;\cc)$
    \item a $\cc$-linear seven-form $\Omega\in\Omega^7(M;\cc)$
    \item a symmetric bilinear form $B\in\Gamma(M;\mt{S}^2(TM)\otimes\cc)$
    \item two signature $(n,n)$ pseudo-Riemannian metrics $g_1=\re B$ and $g_2=\im B$.
  \end{itemize}
\end{proposition}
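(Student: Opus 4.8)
The plan is to run the standard associated-bundle argument. A reduction of the frame bundle of $M$ to a principal $\cg$-bundle $P$ means precisely that every $\cg$-invariant tensor on the model space $\imo\cong\cc^7\cong\rr^{14}$ transports to a globally defined tensor field of the same type on $M$; concretely, the associated bundle $P\times_{\cg}\rr^{14}$ is canonically identified with $TM$ (it is a sub-bundle of the full frame bundle, whose associated bundle for the defining representation is $TM$), so a $\cg$-invariant element of $\mt{End}(\rr^{14})$, of $\Lambda^3(\rr^{14})\dual$, of $\Lambda^7(\rr^{14})\dual$, or of $S^2(\rr^{14})\dual\otimes\cc$ descends to a section of $\mt{End}(TM)$, of $\Lambda^3 T\dual M\otimes\cc$, of $\Lambda^7 T\dual M\otimes\cc$, or of $S^2 T\dual M\otimes\cc$ respectively. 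So the whole content is to check that the five structures listed are invariant under $\cg$ viewed inside $\GL(14,\rr)$.

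First I would record the complex structure: since $\cg$ is by definition the automorphism group of the $\cc$-algebra $\oo$, it acts $\cc$-linearly, hence in the identification $\rr^{14}=\rr\langle e_1,\dots,e_7\rangle\oplus i\,\rr\langle e_1,\dots,e_7\rangle$ it commutes with multiplication by $i$; so the model complex structure $J_0$ is $\cg$-invariant and descends to $J\in\Gamma(M;\mt{End}(TM))$ with $J^2=-\id$. By Proposition \ref{prop:g2stabilizerdefinition}, $\cg$ is exactly the subgroup of $\SL(\imo)$ fixing $\varphi_0$, so $\varphi_0$ descends to $\varphi\in\Omega^3(M;\cc)$, and the complex volume form $\Omega_0=e^{1\dots7}$ --- fixed by all of $\SL(\imo)$ and in particular by $\cg$ --- descends to $\Omega\in\Omega^7(M;\cc)$. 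By Proposition \ref{prop:ort}, $\cg$ preserves the symmetric bilinear form $B_0$ on $\imo$, so $B_0$ descends to a complex symmetric bilinear form $B$ on $TM$. Each of these is $\cc$-linear because the model tensor is and because $J$ is the structure induced on $TM$.

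Finally I would identify $g_1=\re B$ and $g_2=\im B$ and pin down their signature (here $n=7$). Pointwise non-degeneracy of each is inherited from that of $B_0$ on $\rr^{14}$. That $\re{B_0}$ has signature $(7,7)$ is the computation already done earlier, right after the definition of $\mt O(V,B)$: splitting an orthonormal basis of $B_0$ into real and imaginary parts shows $\iota(A)$ preserves the standard $(n,n)$ metric, which is exactly $\re{B_0}$. For the imaginary part, Remark \ref{rmk:anticompg} gives that $(\re{B_0},J_0)$ is skew-compatible, so $\im{B_0}(u,v)=-\re{B_0(iu,v)}$ is $\re{B_0}$ precomposed in its first slot with the invertible map $J_0$; since the $S^1$-family (\ref{eq:s1fammetrics}) joining $\re{B_0}$ to $\im{B_0}$ stays non-degenerate throughout and signature is locally constant on non-degenerate symmetric forms, $\im{B_0}$ also has signature $(7,7)$. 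Both statements pass to $M$ fiberwise.

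I do not expect a real obstacle here; the argument is pure bookkeeping of $\cg$-invariance, supplied piecewise by Propositions \ref{prop:ort} and \ref{prop:g2stabilizerdefinition} and Remark \ref{rmk:anticompg}. The one point that deserves care is confirming that each listed tensor is invariant under $\cg$ itself (neither under a smaller nor a larger group): $J$ from the $\cc$-linearity built into the definition of $\cg$; $\varphi$ and $\Omega$ from $\cg=\{A\in\SL(\imo):A\dual\varphi_0=\varphi_0\}$; and $B$ from $\cg\le\mt O(\oo,B)$, after which $g_1$ and $g_2$ are merely its real and imaginary parts.
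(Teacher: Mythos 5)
Your proposal is correct and takes essentially the same route as the paper: the paper's own proof is the one-line associated-bundle argument (``since $\cg$ preserves each one of these structures, one may pull them back onto $M$ by using a $\cg$-frame''), with the invariance facts you spell out---$\cc$-linearity of automorphisms, Proposition \ref{prop:g2stabilizerdefinition}, Proposition \ref{prop:ort}, and the $S^1$-family signature argument---already established in the linear algebra section. Your write-up simply makes that bookkeeping explicit, so there is nothing to correct.
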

\begin{proof}
  Since $\cg$ preserves each one of these structures, one may pull them back onto $M$ by using a $\cg$-frame.
\end{proof}
Next, we reformulate the above definition.
Since $\cg$ is the stabilizer of $\varphi_\cc$ in $\mt{SL}(\im\oo)$ (by Proposition \ref{prop:g2stabilizerdefinition}), one may also use the following definition of (almost) $\cg$-manifolds.
\begin{definition}
  A (real) 14 dimensional manifold $(M,J,\varphi,\Omega)$ with an almost complex structure $J$, a $\cc$-multilinear three form $\varphi$ and a $\cc$-multilinear seven-form $\Omega$ is called an (almost) $\cg$-manifold if for every $m\in M$, there is an $\rr$-linear isomorphism $(T_mM,J,\varphi,\Omega)\cong(\imo,i,\varphi_0,\Omega_0)$.
\end{definition}

Let $Y$ be a (real) 3-dimensional submanifold of a $\cg$-manifold $M$.
If $T_pY$ is a $B$-real associative plane in $T_pM$ for every $p$, then $Y$ is called a $B$-real associative submanifold.
In order to define isotropic associative submanifolds, one needs an (almost) symplectic structure compatible with $\cg$ structure.
We say that a symplectic structure is compatible with $\cg$ structure if they are point-wise compatible in the sense of subsection \ref{ssec:compatstr}.
We call a (real) 3-submanifold $Y$ isotropic associative submanifold if $T_pY$ is an isotropic associative plane in $T_pM$ for every $p$.

\section{Complexification of a \texorpdfstring{$G_2$}{G2} manifold}
In this section, we give two examples of $\cg$ manifolds with a compatible (almost) symplectic structure.
We start with a usual $G_2$ manifold and construct two different $\cg$ manifold structures on its cotangent bundle.

Our first construction is as follows.
Let $\left(M,\varphi\right)$ be a (real) 7-dimensional $G_2$ manifold.
Recall that $M$ is naturally equipped with a Riemannian metric $g$ and a volume form $\Omega$ satisfying 
\begin{equation}
\iota(u)\varphi\wedge\iota(v)\varphi\wedge\varphi = 6g(u,v)\Omega.
\label{eq:phimetvol}
\end{equation}
We can think of the Levi Civita connection on the cotangent bundle as a horizontal distribution and hence, it induces the isomorphism
\begin{equation}
T_{\alpha}T\dual M \cong T_pM\oplus T\dual_pM
\label{eq:tancotcanonicsplit}
\end{equation}
where $\alpha\in T\dual_pM$ and $p\in M$.
To define an almost complex structure on $TT\dual M$, we view the metric as a vector bundle isomorphism $g:TM\to T\dual M$ and we set
\begin{equation}
J(X+\beta) = -g\inv(\beta)+g(X)
\label{eq:cotacs}
\end{equation}
for $(X,\beta)\in T_pM\oplus T\dual_pM = T_\alpha T\dual M$.
Clearly, $J^2=-\id_{TT\dual M}$.

Next, we ``extend $\varphi$ complex linearly'' to $TT\dual M$, i.e.
we define $\varphi_\cc$ to be the unique $\cc$-valued $3$-form satisfying
\begin{enumerate}
\item $\varphi_\cc(X,Y,Z)= \varphi(X,Y,Z)$ and
\item $\varphi_\cc( J(X), Y, Z) = i \varphi(X,Y,Z)$
\label{eq:cotphi}
\end{enumerate}
for horizontal vectors $X,Y,Z$; where we identify $T_pM$ with horizontal part of $T_\alpha T\dual M$ using (\ref{eq:tancotcanonicsplit}).
Similarly, we extend $g$ and $\Omega$ complex linearly and we denote the complexifications by $B$ and $\Omega_\cc$, respectively.
Then, from (\ref{eq:phimetvol}), we immediately get
\begin{equation}
\iota(\xi)\varphi_\cc\wedge\iota(\varepsilon)\varphi_\cc\wedge\varphi_\cc = 6B(\xi,\varepsilon)\Omega_\cc
\label{eq:phimetvolc}
\end{equation}
for $\xi,\varepsilon\in TT\dual M$.
Note that $B$ is non-degenerate and $\Omega_\cc$ is a non-vanishing complex volume form.
Therefore, by (\ref{eq:phimetvolc}), $\varphi_\cc$ is non-degenerate.
We extend $g$ as a hermitian form $h$ as well.
So, $\re h$ is a positive definite metric and $\omega=\im h$ is an almost symplectic form on $T\dual M$.
More explicitly,
\begin{equation*}
  \omega(X+\alpha, Y+\beta) = \alpha(Y)-\beta(X).
\end{equation*}
From the construction it is clear that $\varphi_\cc$ is compatible with $\omega$.

Note that $\re{B}$ is a $(n,n)$ semi-Riemannian metric on $T\dual M$.
Since it agrees with $g$ on horizontal vectors, we denote it by $g$ as well.
Clearly, $J$ and $g$ are skew-compatible.
We also set $w=-\im{B}$. 
More explicitly, $w(\xi,\varepsilon)=g(J\xi,\varepsilon)$.
$w$ is also a $(n,n)$ semi-Riemannian metric.

In the above example, the symplectic form we obtained is not necessarily closed.
Our next example is a similar construction but the symplectic form we obtain at the end is the canonical symplectic form on $T\dual M$.
We obtain this result at the cost of losing some control of the almost complex structure.

Again, we start with a (real) 7-dimensional $G_2$ manifold $(M,\varphi)$ and we think of $g$ as an isomorphism between $TM$ and $T\dual M$.
Using this isomorphism, we think of $\varphi$ as an element of $\Gamma(\Lambda^3 TM)$.
Therefore, $(T\dual_p M,\varphi)$ is a $G_2$-space.
The vertical subspace of $T_\alpha T\dual M$ is canonically defined and isomorphic to $T\dual_{\pi(\alpha)}M$.
The vertical subbundle defines a Lagrangian 7-plane distribution on $(T\dual M,\omega_\mt{can})$.
The space of compatible almost complex structures on $(T\dual_\alpha TM,\Lambda=T\dual_{\pi(\alpha)}M,\varphi,\omega_{\mt{can}})$ is contractible by Lemma \ref{lem:technical}.
Therefore, one can find a global almost complex structure $J$ such that the complexification of $(\Lambda,\varphi)$ with respect to $J$ gives us a compatible triple $(\omega_{\mt{can}}, \varphi_\cc, g)$.
Compatibility here means compatibility at every point in the sense of subsection \ref{ssec:compatstr}.
\vspace{-0.1in}

\section{Deforming associative submanifolds in complexification}
Note that an associative submanifold $Y$ of a $G_2$ manifold $M$, naturally sits as both an isotropic associative submanifold and a $B$-real associative submanifold in the zero section of $T\dual M$.
We consider the infinitesimal deformations of $Y$ in which $Y$ stays isotropic associative in subsection \ref{ssec:defia} and $B$-real associative in subsection \ref{ssec:defra}.
We obtain Seiberg-Witten type equations from the former.

\newpage

\subsection{Deformation as isotropic associative}
\label{ssec:defia}
We denote the normal bundle of $Y$ in $M$ (resp. $T\dual M$) by $\nu_\rr Y$ (resp. $\nu_\cc Y$) and set $\vv=\nu_\rr Y \oplus J\nu_\rr Y$.
Then we have the following decomposition
\begin{equation}
\nu_\cc Y = JTY \oplus \vv.
\label{eq:normdecompY}
\end{equation}

Let $\sigma_t:Y\to T\dual M$ be a one parameter family of embeddings.
Without loss of generality, we may assume that $\dot \sigma_0$ is a section of $\Gamma(\nu_\cc Y)$.
Let $f\in\Gamma(JTY)$, $v\in\Gamma(\vv)$ with $\eta:=f+v = \dot \sigma_0$.
Also, let $\wt G := Gr(3,TT\dual M)\to T\dual M$ denote the Grassmann 3-plane bundle over $T\dual M$.
We can lift the embedding $Y\hookrightarrow T\dual M$ to $Y \hookrightarrow \wt G$ using the Gauss map.
Then, the infinitesimal deformation of $Y$ by $\eta$ induces and infinitesimal deformation of the lift as in \cite{AS08}.

For a tangent space $L=T_xY=\langle e_1,e_2,e_3\rangle$, infinitesimal deformation is given by
\begin{equation*}
  \dot L = \sum_{i=1}^3 e^i\otimes \mc L_{\eta}(e_i) \in T_L\wt G.
\end{equation*}
So, the conditions for $Y$ to stay isotropic associative are given by 
\begin{enumerate}
\item $\sum e_i\times\mc L_v(e_i)=0$
\item $\omega(e_i,\mc L_f(e_j))=\omega(e_j, \mc L_f(e_i))$
\end{enumerate}
by Lemma \ref{lem:tangentofia}.

Using the Levi-Civita connection $\nabla$ of $(T\dual M,g)$, we define a Dirac type operator
\begin{align}
\nonumber
\dirac_{A_0} &: \Omega^0(\nu_\cc Y) \to \Omega^0(\nu_\cc Y) \\
\dirac_{A_0}(v) &= \sum e_i\times \nabla_{e_i}(v).
\label{eq:diracop}
\end{align}
Note that in the role of Clifford multiplication we are using the cross product operation.

\begin{align*}
0 &= \sum e_i\times  \mc L_v(e_i) \\
&= \sum e_i\times  (\nabla_{v}e_i - \nabla_{e_i}v) \\
&= \sum e_i\times  \nabla_{v}e_i - \sum e_i\times \nabla_{e_i}v
\end{align*}

We set the perturbation parameter $a(v)=-\sum e_i\times \nabla_ve_i$.
So, we the last equation becomes
\begin{equation}
\dirac_{A}(v) = \dirac_{A_0}(v)+a(v) = 0
\label{eq:dirac1}
\end{equation}
where $A=A_0+a$.

For the isotropy condition, we choose a standard coordinate chart $(q^i,p^i)$ for the symplectic form so that $\omega = \sum dq^i\wedge dp^i$ where $(q^i)$ are coordinates on the base space and $(p^i)$ are fiber directions.
Write $\sigma_t^i=\sigma_t^i(x^1,x^2,x^3)=q^i(\sigma_t(x^1,x^2,x^3))$ for $1\le i \le 7$ and $\sigma_t^j=\sigma_t^j(x^1,x^2,x^3)=p^j(\sigma_t(x^1,x^2,x^3))$ for $8\le j \le 14$ where $(x^1,x^2,x^3)$ are local coordinates on $Y$.
Note that (possibly after reparametrization) we may assume that $(\sigma_t^1,\sigma_t^2,\sigma_t^3)= (x^1,x^2,x^3)$.
Furthermore, since the image of $\sigma_0$ lies in the 0-section of $T\dual M$, we may also assume $\sigma_0^j=0$ for $8\le j \le 14$.

During the deformation $Y$ stays isotropic if $\sigma_t\dual\omega = 0$.
Since
\begin{align*}
  \sigma_{t*}\frac{\partial}{\partial x^i} &=  \sum_{j=1}^7 \frac{\partial \sigma_t^j}{\partial x^i}\frac{\partial}{\partial q^j} + \frac{\partial \sigma_t^{j+7}}{\partial x^i}\frac{\partial}{\partial p^{j}} \\
  &= \sum_{j=1}^3 \delta^j_i\frac{\partial}{\partial q^j} + \sum_{j=4}^7 \frac{\partial \sigma_t^j}{\partial x^i}\frac{\partial}{\partial q^j} + \sum_{j=1}^7 \frac{\partial \sigma_t^{j+7}}{\partial x^i}\frac{\partial}{\partial p^{j}} \\
  &= \frac{\partial}{\partial q^i} + \sum_{j=4}^7 \frac{\partial \sigma_t^j}{\partial x^i}\frac{\partial}{\partial q^j} + \sum_{j=1}^7 \frac{\partial \sigma_t^{j+7}}{\partial x^i}\frac{\partial}{\partial p^{j}},
\end{align*}
we have
\begin{align}
  \nonumber
  0 &= \omega(\sigma_{t*}(\frac{\partial}{\partial x^i}),\sigma_{t*}(\frac{\partial}{\partial x^j})) \\
  &= \frac{\partial \sigma_t^{i+7}}{\partial x^j} - \frac{\partial \sigma_t^{j+7}}{\partial x^i}
  + \sum_{k=4}^7 \frac{\partial \sigma^{k}_t}{\partial x^i}\frac{\partial \sigma^{k+7}_t}{\partial x^j} - \frac{\partial \sigma^{k}_t}{\partial x^j}\frac{\partial \sigma^{k+7}_t}{\partial x^i}.
  \label{eq:globaldefisot}
\end{align}
Note that the last equation is already of the form $da = -q(\psi_1\otimes\psi_2)$ where $a$ is a 1-form on $Y$ given by
\begin{equation*}
  a = \sigma_t^8dx^1+\sigma_t^9dx^2+\sigma_t^{10}dx^3,
\end{equation*}
$\psi_1$ and $\psi_2$ are spinors living as sections of $\Omega^1(\nu_\rr Y)$ and $\Omega^1(J\nu_\rr Y)$ given by
\begin{align*}
  \psi_1 &= \sum_{i=1}^3 \frac{\partial}{\partial x^i}(\sigma^4_t, \dots, \sigma^7_t) dx^i, \\
  \psi_2 &= \sum_{j=1}^3 \frac{\partial}{\partial x^j}(\sigma^{11}_t, \dots, \sigma^{14}_t) dx^j,
\end{align*}
and $q$ is a bilinear map given by
\begin{equation*}
  q(\psi_1\otimes\psi_2) = \psi_1\times \psi_2
\end{equation*}
here the cross product is taken in the $1$-form parts with metric identification.


\subsection{Deformation as \texorpdfstring{$B$}{B}-real associative}
\label{ssec:defra}
We proceed as in subsection \ref{ssec:defia}.
The conditions for $Y$ to stay $B$-real associative are given as in Lemma \ref{lem:tangentofrealassocplanes}.
\begin{enumerate}
\item $\sum e_i\times\mc L_v(e_i)=0$
\item $g(Je_j,\mc L_f(e_i))+g(Je_i,\mc L_f(e_j))=0$
\end{enumerate}
The first condition is the same as before so we get the same equation
\begin{equation}
  \dirac_{A}(v) = \dirac_{A_0}(v)+a(v) = 0.
\end{equation}

For the second condition, we identify $f\in\Gamma(JTY)$ with $-Jf\in\Gamma(TY)$.
Then, we see that
\begin{align*}
  \mc L_f\left( g(e_i,e_j) \right) &=  \mc L_f(g)(e_i,e_j) + g(\mc L_fe_i,e_j)+g(e_i,\mc L_fe_j) \\
  \mc L_f\left( \delta_{ij}\right) &=  \mc L_f(g)(e_i,e_j).
\end{align*}
In other words, $f$ is the image of a Killing vector field on $Y$.

\vspace{.1in}

{\it Acknowledgements: We would like to thank Mahir Can for teaching us complex Lie groups, and the first named author would like to thank Sema Salur for collaborating \cite{AS08}, which eventually gave way to this work.}

\vspace{.05in}

\bibliographystyle{alpha}

\end{document}